\PassOptionsToPackage{unicode}{hyperref}
\PassOptionsToPackage{hyphens}{url}
\documentclass[
]{article}
\usepackage{lmodern}
\usepackage{amssymb,amsmath,amsthm}
\usepackage{ifxetex,ifluatex}
\ifnum 0\ifxetex 1\fi\ifluatex 1\fi=0 
  \usepackage[T1]{fontenc}
  \usepackage[utf8]{inputenc}
  \usepackage{textcomp} 
\else 
  \usepackage{unicode-math}
  \defaultfontfeatures{Scale=MatchLowercase}
  \defaultfontfeatures[\rmfamily]{Ligatures=TeX,Scale=1}
\fi
\IfFileExists{upquote.sty}{\usepackage{upquote}}{}
\IfFileExists{microtype.sty}{
  \usepackage[]{microtype}
  \UseMicrotypeSet[protrusion]{basicmath} 
}{}
\makeatletter
\@ifundefined{KOMAClassName}{
  \IfFileExists{parskip.sty}{%
    \usepackage{parskip}
  }{
    \setlength{\parindent}{0pt}
    \setlength{\parskip}{6pt plus 2pt minus 1pt}}
}{
  \KOMAoptions{parskip=half}}
\makeatother
\usepackage{xcolor}
\IfFileExists{xurl.sty}{\usepackage{xurl}}{} 
\IfFileExists{bookmark.sty}{\usepackage{bookmark}}{\usepackage{hyperref}}
\hypersetup{
  pdftitle={Constructive proof of Herschfeld's Convergence Theorem},
  pdfauthor={Ran Gutin \\ Department of Computer Science; Imperial College London;\\ South Kensington, London SW7 2BU; r.gutin20@imperial.ac.uk},
  hidelinks,
  pdfcreator={LaTeX via pandoc}}
\urlstyle{same} 
\usepackage{graphicx}
\makeatletter
\def\maxwidth{\ifdim\Gin@nat@width>\linewidth\linewidth\else\Gin@nat@width\fi}
\def\maxheight{\ifdim\Gin@nat@height>\textheight\textheight\else\Gin@nat@height\fi}
\makeatother
\setkeys{Gin}{width=\maxwidth,height=\maxheight,keepaspectratio}
\makeatletter
\def\fps@figure{htbp}
\makeatother
\setlength{\emergencystretch}{3em} 
\providecommand{\tightlist}{%
  \setlength{\itemsep}{0pt}\setlength{\parskip}{0pt}}
\usepackage{cite}
\usepackage{amsmath}
\DeclareMathOperator*{\kap}{\text{\Large{$\mathcal K$}}}

\title{Constructive proof of Herschfeld's Convergence Theorem}
\author{Ran Gutin \\ Department of Computer Science; \\Imperial College London; South Kensington, London SW7 2BU; \\ r.gutin20@imperial.ac.uk}
\date{}

\theoremstyle{theorem}
\newtheorem{theorem}{Theorem}
\theoremstyle{lemma}
\newtheorem{lemma}{Lemma}
\theoremstyle{corollary}
\newtheorem{corollary}{Corollary}[lemma]
\theoremstyle{remark}
\newtheorem*{remark}{Remark}
\theoremstyle{definition}
\newtheorem*{definition}{Definition}

\begin{document}
\maketitle
\begin{abstract}
We give a constructive proof of Herschfeld's Convergence Theorem. The proof is based on simple and generalisable insights about concave/convex functions. Explicit convergence bounds are derived. A new special function is defined to enable one of these bounds to be expressed. We also generalise Herschfeld's result to infinite radicals that nest transfinitely many times.

\textbf{Keywords}: constructive proof; continued powers; numerical analysis; concave functions; reduction to lpo

\textbf{MSC codes}: 03F60, 26E40, 40A30, 65D15, 65D99
\end{abstract}

\hypertarget{introduction}{%
\section{Introduction}\label{introduction}}

In this paper, we present a constructive proof of Herschfeld's Convergence Theorem. Our formulation differs from Herschfeld's in a few ways: We consider radicals that nest \emph{more than} infinitely many times, as these are essential to the proof; additionally, we formulate the conditions for convergence in such a way that a constructive proof is possible.

Though the result is stated only for square-roots, it is easy to generalise it to all powers in the interval \((0,1)\). We also show how to apply the techniques presented here to some other continued functions, like continued \(\arctan\).

\hypertarget{motivation}{%
\subsection{Motivation}\label{motivation}}

The motivation for writing this paper was to better understand how to ``constructivise'' arguments that appeal to the Monotone Convergence Theorem. An example of that being Herschfeld's original proof of his eponymous theorem.

Our results can also be applied to show convergence of other continued functions, like continued \(\arctan\). By continued functions, we mean the limit (should it exist) of \(f(A + f(B + f(C + \dotsc)))\). If the function \(f\) is monotonically increasing and concave then our methods are applicable, at least to some degree.

It's worth noting that Herschfeld's proof can be expressed in one page, while the argument here is longer and more sophisticated. However, unlike Herschfeld's non-constructive proof, the argument here states precisely when it is possible to estimate the limit of an infinite radical, and also how to measure the error of such an estimate. Herschfeld's proof only provides knowledge of when the limit exists, but not how to find it.

\hypertarget{related-work}{%
\subsection{Related work}\label{related-work}}

A very thorough chronology of results on problems related to infinite radicals and continued \(f\)-functions is provided by \cite{Jones}.

One of the first complete proofs of the necessary and sufficient condition for convergence of an infinite radical was given by \cite{Hersch}, but an earlier proof was given by Paul Wiernsberger in 1904. Some interesting closed-form solutions of special cases were found by Ramanujan among others \cite{ramanujan1911} (see \cite{Jones} for others). Jones studied various generalisations of Herschfeld's convergence result. These generalisations allow for the powers to be arbitrary positive numbers \cite{Jones1995} in the range \((1,\infty)\) or arbitrary negative numbers \cite{Jones2015} .

Continued radicals continue to be an object of study. See for instance \cite{aoki2016}, \cite{doi:10.4169/amer.math.monthly.121.01.045} and \cite{Convergencemonotonicityandinequalitiesofsequencesinvolvingcontinuedpowers}.

\hypertarget{overview-and-strategy}{%
\section{Overview and strategy}\label{overview-and-strategy}}

A note on notation: All numbers are taken to be non-negative real numbers. The symbol \(\phi\) denotes the golden ratio, equal to both \(\frac{1 + \sqrt 5}{2}\) and the infinite radical \(\sqrt{1 + \sqrt{1 + \dots}}\). Also, \(\mathbb R^+\) denotes the set of non-negative real numbers.

The main theorem of this paper is the combination of Theorem \ref{theorem-sufficient} and Theorem \ref{theorem-necessity}. A simplified version of this theorem states:

\begin{quote}
\textbf{Simplified Constructive HCT}: An infinite radical \(\sqrt{a_1 + \sqrt{a_2 + \dots}}\) where \(a_k \geq 0\) converges if and only if for all \(n\) there exists \(\lim_{k\geq n} \sup_{k=n}^\infty a_k^{2^{-k}}\).
\end{quote}

This theorem is simplified because in our actual theorem statement, the ``infinite radicals'' can nest transfinitely many times.

\hypertarget{strategy-for-proving-if}{%
\subsection{Strategy for proving ``if''}\label{strategy-for-proving-if}}

Let \(M_n = \lim_{k\geq n} \sup_{k=n}^\infty a_k^{2^{-k}}\). A lower bound for the infinite radical is now \(M_1\), and an upper bound is \(M_1 \phi\). We can improve these upper and lower bounds by substituting them deeper inside the radical. In other words, an improved lower bound is \(\sqrt{a_1 + M_2^2}\), and an improved upper bound is \(\sqrt{a_2 + M_2^2 \phi}\). Again, by substituting even deeper inside the radical, we can get a better lower bound in the form \(\sqrt{a_1 + \sqrt{a_2 + M^4}}\), and a better upper bound in the form \(\sqrt{a_1 + \sqrt{a_2 + \phi M^4}}\). And so on.

Now we need to estimate the difference between upper and lower bounds. The key observation is the following: Let's say our upper bound is in the form \(\sqrt{7 + \sqrt{3 + u}}\) for some \(u\), and our lower bound is in the form \(\sqrt{7 + \sqrt{3 + l}}\) for some \(l\). Let \(\epsilon\) be the difference. It so happens that:

\[\begin{aligned}
\epsilon &= \sqrt{7 + \sqrt{3 + u}} - \sqrt{7 + \sqrt{3 + l}} \\
&\leq \sqrt{0 + \sqrt{0 + u}} - \sqrt{0 + \sqrt{0 + l}} \\
&\leq u^{1/4} - l^{1/4}
\end{aligned}\]

In other words, by lowering the numbers \(7\) and \(3\) down to zero, we amplify the difference. But in doing so, we also simplify the estimate for the difference. This is ultimately how we prove ``if''.

\hypertarget{strategy-for-proving-only-if}{%
\subsection{Strategy for proving ``only if''}\label{strategy-for-proving-only-if}}

This is essentially in an inverse strategy to proving ``if''. In doing so, we confront transfinite radicals.

Before we describe transfinite radicals, observe that an inverse to the error overstimate given above, i.e. \[\begin{aligned}
\epsilon &= \sqrt{7 + \sqrt{3 + u}} - \sqrt{7 + \sqrt{3 + l}} \\
&\leq \sqrt{0 + \sqrt{0 + u}} - \sqrt{0 + \sqrt{0 + l}} \\
&\leq u^{1/4} - l^{1/4}
\end{aligned}\] could be \[\begin{aligned}
\epsilon &= \sqrt{7 + \sqrt{3 + u}} - \sqrt{7 + \sqrt{3 + l}} \\
&\geq \sqrt{100 + \sqrt{100 + u}} - \sqrt{100 + \sqrt{100 + l}}
\end{aligned}\] In other words, we can drive the \(3\) and \(7\) to \emph{larger numbers} to \emph{understimate} the error.

\hypertarget{transfinite-radicals}{%
\subsection{Transfinite radicals}\label{transfinite-radicals}}

We consider a more general class of nested radicals than Herschfeld and others. These radicals can nest \emph{more than} infinitely many times. All our results are true for this more general class of nested radicals.

We are led to do this because of a particular family of transfinite radicals that occur naturally in our argument. This family is called the \(U\) function, and is roughly equal to \(\sqrt{1 + \sqrt{1 + \dotsb \sqrt{x^{2^\omega}}}}\) where \(\omega\) denotes the first transfinite ordinal. More precisely, \(U(x)\) is the limit of the sequence \((x, \sqrt{1+x^2}, \sqrt{1 + \sqrt{1 + x^4}}, \dotsc)\) which can be interpreted as a transfinite radical for any fixed \(x\).

\hypertarget{absence-of-differentiation}{%
\subsection{Absence of differentiation}\label{absence-of-differentiation}}

The error estimation technique used in this paper contrasts with the one proposed by Herschfeld, in that it doesn't use any differentiation. This is useful because the derivative of any expression that involves nested radicals is quite complicated. For instance, \[\frac{d}{dx}\sqrt{A + \sqrt{B + \sqrt{C + x}}} = \frac 1{8 \sqrt{C + x} \sqrt{B + \sqrt{C + x}} \sqrt{A + \sqrt{B + \sqrt{C + x}}}}.\] A similar problem afflicts all ``continued functions'', not just continued square-roots. We adopt an error estimation strategy which we have already partially described, and is given in full generality by Lemma \ref{lemma-3}.

We demonstrate a simple example of using Lemma \ref{lemma-3} to show constructively that the continued function \(\arctan(A + \arctan(B + \arctan(C + \dotsc)))\) converges whenever \(A, B, C\) etc. are non-negative. We even give a worst-case error bound.

\hypertarget{concave-functions}{%
\section{Concave functions}\label{concave-functions}}

Proofs of Lemmas \ref{lemma-1} and \ref{lemma-2} can be found elsewhere in the literature, but we give proofs for the sake of completeness.

\begin{lemma}
\hypertarget{lemma-1}{%
\label{lemma-1}}

For any concave function \(h\), real number \(x\), and \(\Delta x \in \mathbb R^+\), it holds that \(h(x+\Delta x) - h(x) \leq h(\Delta x) - h(0)\).
\end{lemma}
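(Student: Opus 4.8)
The plan is to reduce the inequality to the defining property of concavity, namely that the difference quotient (slope) of a concave function is non-increasing. First I would dispose of the trivial case $\Delta x = 0$, where both sides are $0$. So assume $\Delta x > 0$. The claim $h(x+\Delta x) - h(x) \leq h(\Delta x) - h(0)$ can be rewritten, after dividing by $\Delta x > 0$, as
\[
\frac{h(x+\Delta x) - h(x)}{\Delta x} \;\leq\; \frac{h(\Delta x) - h(0)}{\Delta x},
\]
i.e. the secant slope of $h$ over the interval $[x, x+\Delta x]$ is at most the secant slope over $[0, \Delta x]$. Both intervals have the same length $\Delta x$, so this is precisely the statement that translating an interval of fixed length to the right (from $[0,\Delta x]$ to $[x,x+\Delta x]$, using $x \geq 0$ — note the paper's convention that all numbers are non-negative) cannot increase the secant slope of a concave function.

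The key step is therefore to invoke (or quickly justify) the monotonicity of secant slopes for concave functions. Concretely: for a concave $h$ and points $p < q < r$, one has $\frac{h(q)-h(p)}{q-p} \geq \frac{h(r)-h(q)}{r-q}$ (the "three-chord" or "slopes" lemma), and iterating this gives that $s \mapsto \frac{h(s+\Delta x) - h(s)}{\Delta x}$ is non-increasing in $s$. Applying this with $s = 0$ and $s = x \geq 0$ gives exactly the rearranged inequality above, and multiplying back through by $\Delta x$ recovers the statement. If one prefers to avoid even naming the three-chord lemma, I would instead argue directly from the definition: write $x = \lambda \cdot 0 + (1-\lambda)(x + \Delta x)$ and $\Delta x = \mu \cdot 0 + (1 - \mu)(x + \Delta x)$ are not quite matched up, so the cleanest self-contained route is: set $t \in [0,1]$ with $x = t \cdot (x + \Delta x)$ when $x + \Delta x > 0$ (if $x + \Delta x = 0$ everything is zero), observe $\Delta x = (1-t)(x+\Delta x)$ as well is false — so one really does want the two-interval comparison, obtained by applying concavity at the point $x + \Delta x$ expressed as a convex combination of $\Delta x$ and $2(x+\Delta x) - \Delta x$...

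Rather than belabor these bookkeeping choices, the honest summary is: the one substantive ingredient is the chord-slope monotonicity of concave functions, which is standard; everything else is algebraic rearrangement and handling the degenerate case $\Delta x = 0$. The main (minor) obstacle is simply choosing the slickest packaging — and since the paper emphasizes constructivity, I would make sure the argument uses only the finitary convexity inequality $h(\lambda a + (1-\lambda) b) \geq \lambda h(a) + (1-\lambda) h(b)$ applied at explicitly given convex-combination coefficients, with no appeal to derivatives or limits, so that the proof goes through verbatim in a constructive setting.
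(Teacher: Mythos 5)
Your high-level framing is exactly right, and it is also the paper's: the inequality is precisely the statement that a concave function has non-increasing chord slopes over intervals of fixed length $\Delta x$, and the paper proves this from the finitary definition of concavity with explicit coefficients, no derivatives or limits. But your attempt to flesh out the self-contained version derails on an avoidable algebra slip, so as written the proposal does not complete the proof.

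The slip is here: you set $t = \dfrac{x}{x+\Delta x}$ so that $x = t(x+\Delta x) + (1-t)\cdot 0$, and then you assert that $\Delta x = (1-t)(x+\Delta x)$ ``is false.'' It is in fact true: with this $t$ one has $1-t = \dfrac{\Delta x}{x+\Delta x}$, hence $(1-t)(x+\Delta x) = \Delta x$. So both $x$ and $\Delta x$ are convex combinations of the \emph{same} two endpoints $0$ and $x+\Delta x$, with \emph{complementary} coefficients. That is exactly the ``matching up'' you thought was missing, and it is what makes the direct argument close. Concretely, applying concavity at $\Delta x$ gives
\[
h(\Delta x) - h(0) \;\geq\; \frac{\Delta x}{x+\Delta x}\bigl(h(x+\Delta x) - h(0)\bigr),
\]
while applying concavity at $x$ and rearranging gives
\[
h(x+\Delta x) - h(x) \;\leq\; \frac{\Delta x}{x+\Delta x}\bigl(h(x+\Delta x) - h(0)\bigr),
\]
and chaining these two yields the lemma. (This does use $x \geq 0$ so that the coefficients lie in $[0,1]$, which is covered by the paper's standing convention.) So the idea you abandoned was the right one; the only thing missing was noticing that the coefficients in the two convex combinations really are $t$ and $1-t$ of each other, and there was no need to reach for a third point such as $2(x+\Delta x)-\Delta x$.
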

\begin{proof}
\hypertarget{proof}{%
\label{proof}}
The definition of concavity implies \(h(\Delta x) \geq \frac{x}{x + \Delta x}h(0) + \frac{\Delta x}{x + \Delta x} h(x + \Delta x).\) Subtracting \(h(0)\) from both sides gives \[\begin{aligned}
h(\Delta x) - h(0) \geq \frac{\Delta x}{x + \Delta x}(h(x+\Delta x) - h(0)).\qquad{(1)}
\end{aligned}\] Similarly, the definition of concavity implies \(h(x) \geq \frac{\Delta x}{x + \Delta x}h(0) + \frac{ x}{x + \Delta x} h(x + \Delta x).\) Subtracting \(h(x + \Delta x)\) from both sides, and then negating, gives \[\begin{aligned}
h(x + \Delta x) - h(x) \leq \frac{\Delta x}{x + \Delta x}(h(x + \Delta x) - h(0)). \qquad{(2)}\\
\end{aligned}\] Combining \((1)\) with \((2)\) gives \[h(\Delta x) - h(0) \geq  \frac{\Delta x}{x + \Delta x}(h(x+\Delta x) - h(0)) \geq h(x + \Delta x) - h(x)\] \end{proof}

\begin{corollary}
\label{corollary-1.1}
Given \(u > l\), \(b > s\) and a concave function \(h\), it holds that \(h(b + u) - h(b + l) < h(s + u) - h(s + l)\).
\end{corollary}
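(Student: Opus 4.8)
The plan is to reduce the statement to Lemma~\ref{lemma-1} by a translation. Put $\Delta x := u - l$ and $x := b - s$; both are strictly positive by hypothesis. Define $g(y) := h(y + s + l)$, which is again concave since it is $h$ precomposed with a translation. Applying Lemma~\ref{lemma-1} to $g$ with this $x$ and $\Delta x$ yields $g(x + \Delta x) - g(x) \le g(\Delta x) - g(0)$.

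Next I would just unwind the definitions. The left-hand side is $g(b-s+u-l) - g(b-s) = h(b+u) - h(b+l)$, and the right-hand side is $g(u-l) - g(0) = h(s+u) - h(s+l)$. This gives $h(b+u) - h(b+l) \le h(s+u) - h(s+l)$, i.e. the claimed inequality in its non-strict form. (One could equivalently phrase the whole thing as ``the increment of a concave function over a window of fixed width $u-l$ is non-increasing as the window slides right'', but routing it through Lemma~\ref{lemma-1} avoids repeating the convexity bookkeeping.)

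The one genuine obstacle is the \emph{strict} inequality: it simply fails for a general concave $h$ (a linear $h$ gives equality), so it has to be read under the strict concavity enjoyed by every function to which the corollary is applied later (square root, $\arctan$, and so on). Under strict concavity I would re-examine the proof of Lemma~\ref{lemma-1}: its two supporting estimates are concavity evaluated at a convex combination, and each becomes strict provided the combining weights lie strictly between $0$ and $1$. In the present reduction the first estimate evaluates $h$ at $\Delta x = \tfrac{x}{x+\Delta x}\cdot 0 + \tfrac{\Delta x}{x+\Delta x}\cdot(x+\Delta x)$, a strictly interior point because $x = b - s > 0$ and $\Delta x = u - l > 0$; the second is strictly interior for the same reason. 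Hence the chain $h(\Delta x) - h(0) \ge \tfrac{\Delta x}{x+\Delta x}\bigl(h(x+\Delta x) - h(0)\bigr) \ge h(x+\Delta x) - h(x)$ is strict, and translating back through $g$ delivers the strict conclusion $h(b+u) - h(b+l) < h(s+u) - h(s+l)$.
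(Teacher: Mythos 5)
Your reduction is exactly the paper's: the paper sets $H(X)=h(s+l+X)$, $x=b-s$, $\Delta x=u-l$, applies Lemma~\ref{lemma-1}, and unwinds to $h(s+u)-h(s+l)\geq h(b+u)-h(b+l)$, which is precisely your $g$ up to renaming. So the core of the argument matches.

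You are also right to flag the strictness issue, and in fact you have spotted a small defect in the paper itself: the corollary is stated with a strict inequality, but the paper's proof (like yours) derives only the non-strict $\geq$, and a linear $h$ is concave and gives equality, so the statement as written is false for general concave $h$. Fortunately nothing downstream is harmed: Lemma~\ref{lemma-3}, the only place the corollary is used, invokes it only in the non-strict form. Your repair under \emph{strict} concavity is sound --- with $x=b-s>0$ and $\Delta x=u-l>0$ the convex combinations in the proof of Lemma~\ref{lemma-1} have interior weights and distinct endpoints, so both supporting inequalities become strict --- but the cleaner fix, and the one consistent with how the paper actually uses the result, is simply to weaken the conclusion of the corollary to $h(b+u)-h(b+l)\leq h(s+u)-h(s+l)$.
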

\begin{remark}
\(b\), \(s\), \(u\) and \(l\) can be remembered using the mnemonics ``bigger'', ``smaller'', ``upper'' and ``lower'', which indicate through opposite meanings which is greater than which. There is no implied ordering between bigger and lower, or smaller and upper, because bigger isn't opposite to lower.
\end{remark}
\begin{proof}
\hypertarget{proof-1}{%
\label{proof-1}}
Apply Lemma \ref{lemma-1} to \(H(X) = h(s + l + X)\), \(x = b - s\), \(\Delta x = u - l\) to get \(H(\Delta x) - H(0) \geq H(x + \Delta x) - H(x)\) which is equivalent to \(h(s + u) - h(s + l) \geq h(b + u) - h(b + l)\).

\end{proof}
\begin{lemma}
\hypertarget{lemma-2}{%
\label{lemma-2}}

If \(f\) and \(g\) are concave functions, and \(f\) is non-decreasing, then \(f \circ g\) is also concave and non-decreasing.
\end{lemma}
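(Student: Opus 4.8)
The plan is to establish the two conclusions separately, each by a short direct argument from the definition of concavity (and of monotonicity). For the concavity of $f \circ g$, I would fix two points $x, y$ in the common domain and a weight $t \in [0,1]$, and then chain two inequalities. First, concavity of $g$ gives $g(tx + (1-t)y) \ge t\,g(x) + (1-t)\,g(y)$. Since $f$ is non-decreasing, applying $f$ to both sides preserves the direction of the inequality, so $f\bigl(g(tx+(1-t)y)\bigr) \ge f\bigl(t\,g(x) + (1-t)\,g(y)\bigr)$. Finally, concavity of $f$ applied to the convex combination on the right yields $f\bigl(t\,g(x)+(1-t)\,g(y)\bigr) \ge t\,f(g(x)) + (1-t)\,f(g(y))$. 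Concatenating these two displays gives precisely $(f\circ g)(tx+(1-t)y) \ge t\,(f\circ g)(x) + (1-t)\,(f\circ g)(y)$, which is the concavity inequality for $f \circ g$.

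For the monotonicity conclusion, I would simply note that a composition of non-decreasing functions is non-decreasing: if $x \le y$ then $g(x) \le g(y)$ and hence $f(g(x)) \le f(g(y))$. The one subtlety worth flagging is the hypothesis on $g$: the first of these two steps needs $g$ to be non-decreasing, and without it $f \circ g$ can genuinely fail to be monotone. I would therefore either read "$g$ non-decreasing" as being part of the intended hypotheses — which is consistent with every $g$ the lemma is later applied to, namely maps of the form $x \mapsto \sqrt{c+x}$ and $x \mapsto \arctan(c+x)$ — or state that assumption explicitly before running the one-line argument.

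I do not expect a real obstacle here; both parts are immediate once the inequalities are arranged in the correct order. The only place requiring any care is the direction of the inequality at the step where $f$ is applied to both sides of the concavity inequality for $g$: this is exactly where "$f$ non-decreasing" (rather than merely "$f$ concave") is used, and it is worth remarking that concavity of $f$ alone would not license that step. Everything else is bookkeeping with convex combinations.
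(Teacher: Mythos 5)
Your argument for concavity is exactly the paper's: concavity of $g$, then monotonicity of $f$ to apply $f$ to both sides, then concavity of $f$ on the convex combination. No difference there.

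Your flag about the monotonicity hypothesis is a legitimate catch, not a quibble. The lemma as stated does not assume $g$ is non-decreasing, and without that assumption the conclusion ``$f \circ g$ is non-decreasing'' is simply false: take $f(x)=x$ and $g(x)=-x$, both concave with $f$ non-decreasing, yet $f\circ g$ is strictly decreasing. The paper's proof opens with ``$f \circ g$ is clearly non-decreasing,'' which silently presupposes exactly the missing hypothesis. You are also right that this is harmless in context, since every $g$ the paper ever composes with is of the form $x \mapsto h(c + x)$ for non-decreasing $h$, so the intended reading is clearly that $g$ is non-decreasing too. Stating that assumption explicitly, as you propose, is the correct fix; the rest of your argument matches the paper line for line.
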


\begin{proof}

\hypertarget{proof-2}{%
\label{proof-2}}
\(f \circ g\) is clearly non-decreasing. We hence show that it's concave: Given any \(x, y \in \mathbb R\) and \(\lambda\in[0,1]\), we have that

\[\begin{aligned}
& g(\lambda x + (1 - \lambda)y) &\geq& \lambda g(x) + (1-\lambda) g(y)\\
\therefore& f(g(\lambda x + (1 - \lambda)y)) &\geq& f(\lambda g(x) + (1-\lambda) g(y)) \qquad{\text{because f is non-decreasing}}\\
&&\geq& \lambda f(g(x)) + (1-\lambda) f(g(y)) \qquad{\text{because f is concave}}
\end{aligned}\]

\end{proof}

\begin{lemma}
\hypertarget{lemma-3}{%
\label{lemma-3}}

For any concave and non-decreasing function \(h\), pair of real numbers \(u > l\), and two sequences \((a_k)_{k=1}^n\) and \((b_k)_{k=1}^n\) of equal length where each \(a_k \leq b_k\), it holds that \[\begin{aligned}
h(a_1 + h(a_2 + \dots h(a_n + u))) - h(a_1 + h(a_2 + \dots h(a_n + l))) \geq
\\ h(b_1 + h(b_2 + \dots h(b_n + u))) - h(b_1 + h(b_2 + \dots h(b_n + l))).
\end{aligned}\]

\end{lemma}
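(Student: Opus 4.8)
The plan is to prove the inequality by induction on $n$, using Corollary \ref{corollary-1.1} as the engine and Lemma \ref{lemma-2} to ensure that the relevant compositions remain concave and non-decreasing so that the induction hypothesis applies.

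First I would handle the base case $n = 0$ (or equivalently $n = 1$): for $n = 1$ the claim is $h(a_1 + u) - h(a_1 + l) \geq h(b_1 + u) - h(b_1 + l)$, which is exactly Corollary \ref{corollary-1.1} with $s = a_1$, $b = b_1$ (using $a_1 \leq b_1$; if $a_1 = b_1$ the two sides are equal, so the non-strict inequality holds). Then for the inductive step, I would fix $n$, assume the statement for sequences of length $n-1$, and write $H(x) := h(a_1 + h(a_2 + \dots h(a_{n-1} + x)))$ and $G(x) := h(b_1 + h(b_2 + \dots h(b_{n-1} + x)))$. By Lemma \ref{lemma-2} (applied repeatedly, starting from the innermost $h$ and composing outward), both $H$ and $G$ are concave and non-decreasing.

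The key manipulation is to split the difference of the depth-$n$ nested expressions into two pieces via a telescoping/triangle step. Set $u' = h(a_n + u)$, $l' = h(a_n + l)$, $u'' = h(b_n + u)$, $l'' = h(b_n + l)$. Since $h$ is non-decreasing and $u > l$ we have $u' \geq l'$ and $u'' \geq l''$; since $a_n \leq b_n$ and $h$ is non-decreasing we have $u' \leq u''$ and $l' \leq l''$; and by Corollary \ref{corollary-1.1} (with $s = a_n$, $b = b_n$) we have $u' - l' \geq u'' - l''$. The left-hand side of the desired inequality is $H(u') - H(l')$ and the right-hand side is $G(u'') - G(l'')$. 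I would bound: $H(u') - H(l') \geq H(u'') - H(l'')$, which is the case $n-1$ applied to the constant-shift situation — more precisely, apply the induction hypothesis to the length-$(n-1)$ sequences $(a_k)$ and $(a_k)$... no: the cleaner route is to apply the induction hypothesis to $(a_1,\dots,a_{n-1})$ vs $(b_1,\dots,b_{n-1})$ with the \emph{same} pair $u'', l''$ to get $H(u'') - H(l'') \geq G(u'') - G(l'')$, and separately show $H(u') - H(l') \geq H(u'') - H(l'')$ using that $H$ is concave and non-decreasing together with the inequalities $u' - l' \geq u'' - l''$, $u' \leq u''$, $l' \leq l''$. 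This last fact is essentially Corollary \ref{corollary-1.1} again — apply it with $h \leftarrow H$, ``bigger'' $b = l''$ ... wait, $l' \leq l''$ and $u' \leq u''$ with $u' - l' \geq u'' - l''$; writing $u' = l' + (u'-l')$ and $u'' = l'' + (u''-l'')$, Lemma \ref{lemma-1} applied to $X \mapsto H(l' + X)$ with appropriate arguments gives $H(u')-H(l') \geq H(l'' + (u''-l'')) - H(l'') = H(u'') - H(l'')$ provided $l'' - l' \geq 0$ and $u'' - l'' \geq 0$, both of which hold. Chaining the two bounds gives $H(u') - H(l') \geq H(u'') - H(l'') \geq G(u'') - G(l'')$, which is the claim.

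The main obstacle I anticipate is bookkeeping rather than a genuine mathematical difficulty: one must be careful that in the inductive step the ``inner'' argument fed to the depth-$(n-1)$ expression is itself a legitimate pair $u', l'$ with $u' > l'$ (this needs $h$ strictly... actually $h$ need only be non-decreasing to get $u' \geq l'$, and the statement of the lemma uses $\geq$ not $>$, so working with $\geq$ throughout and noting Corollary \ref{corollary-1.1}'s strict conclusion degrades gracefully to $\geq$ is fine), and that Lemma \ref{lemma-2} is invoked with the correct orientation (the outer function must be non-decreasing, which it is since each copy of $h$ is). A small point worth stating explicitly is that the finite composition $x \mapsto h(a_1 + h(a_2 + \dots h(a_{n-1}+x)))$ is concave and non-decreasing: this follows by induction from Lemma \ref{lemma-2}, noting that $x \mapsto a_k + x$ is affine (hence concave) and non-decreasing, so each step $g \mapsto h(a_k + g)$ preserves concavity and monotonicity.
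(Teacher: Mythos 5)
Your proposal is correct, and its core is the same as the paper's: both replace one $a_k$ by $b_k$ at a time, justifying each replacement by Corollary \ref{corollary-1.1} applied to a concave, non-decreasing composite supplied by Lemma \ref{lemma-2} (the paper writes this as an explicit chain of $n$ inequalities proceeding from the outermost $a_1$ inward; your induction unwinds the same chain from the inside out). The one unnecessary detour is your bound $H(u') - H(l') \geq H(u'') - H(l'')$, which you derive by hand from Lemma \ref{lemma-1}; it is in fact just Corollary \ref{corollary-1.1} applied to the composite $\tilde H := H \circ h$ (concave and non-decreasing by Lemma \ref{lemma-2}) with $s = a_n$, $b = b_n$ and the original pair $u > l$, since $\tilde H(a_n + u) - \tilde H(a_n + l) = H(u') - H(l')$ and $\tilde H(b_n + u) - \tilde H(b_n + l) = H(u'') - H(l'')$.
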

\begin{proof}

\hypertarget{proof-3}{%
\label{proof-3}}
Repeated applications of Lemma \ref{lemma-2} and Corollary \ref{corollary-1.1} give:

\[\begin{aligned}
&h(a_1 + \dots h(a_n + u)) - h(a_1 + \dots h(a_n + l))\\
&\geq h(b_1 + h(a_2 + \dots h(a_n + u))) - h(b_1 + h(a_2 + \dots h(a_n + l)))\\
&\geq h(b_1 + h(b_2 + \dots h(a_n + u))) - h(b_1 + h(b_2 + \dots h(a_n + l)))\\
&\dots\\
&\geq h(b_1 + h(b_2 + \dots h(b_n + u))) - h(b_1 + h(b_2 + \dots h(b_n + l)))\\
\end{aligned}\]

\end{proof}
\hypertarget{example-of-using-lemma-3}{%
\section{Example of using Lemma \ref{lemma-3}}\label{example-of-using-lemma-3}}

We consider the easier problem of showing that \(\arctan(A + \arctan(B + \arctan(C + \dotsc)))\) converges whenever \(A, B, C\) etc. are positive. Non-constructively, we can immediately see that this converges because of the Monotone Convergence Theorem. On the other hand, proving it constructively is somewhat of a challenge.

Assume we've only observed \(A\) and \(B\), but not \(C\), or any term after \(C\). Then the maximum value of the expression is \(\arctan(A + \arctan(B + \arctan(\infty)))\), and the minimum value is \(\arctan(A + \arctan(B + \arctan(0)))\). We'll use the latter expression as an estimate. An upper bound for the resulting error \(\epsilon\) is thus \[\epsilon \leq \arctan(A + \arctan(B + \arctan(\infty))) - \arctan(A + \arctan(B + \arctan(0))).\] We now use Lemma \ref{lemma-3} to derive an upper bound on the error by driving \(A\) and \(B\) to zero: \[\begin{aligned}\epsilon &\leq \arctan(0 + \arctan(0 + \arctan(\infty))) - \arctan(0 + \arctan(0 + \arctan(0)))\\
&= \arctan(\arctan(\arctan(\infty))).
\end{aligned}\] So as we observe \(n\) terms (the \(A\), \(B\), \(C\), \(D\)s etc.) our upper bound for the error is \(\arctan^n(\infty)\) where the superscript \(n\) denotes \(n\)-fold iteration. We are done using Lemma \ref{lemma-3}.

It remains to show that \(\arctan^n(\infty)\) converges to \(0\). To do so, first observe that \(\arctan^n(\infty) = \arctan^{n-1}(\pi/2)\).

We can proceed either by attempting to show that \(\arctan^{n-1}(\pi/2) \sim \sqrt{\frac{3}{2n}}\); or by using a sledgehammer, which we do below:

We apply the following lemma: For any function \(f\) such that for all \(x \neq 0\), we have \(|f(x)| < |x|\), it follows that iterates of \(f\) always converge to \(0\). This claim is equivalent to the Fan Principle (sometimes confusingly called the Fan Theorem), which is a constructively acceptable postulate. Equivalence to the Fan Principle is demonstrated by Proposition 3.3.4 of \cite{diener2018constructive}.

\hypertarget{kappa-notation}{%
\section{Kappa notation}\label{kappa-notation}}

We define an operator \(\kap\) which takes as input an ordinal-indexed sequence, and outputs a number.

The value of \[\kap_i(\alpha_i)\] is defined to be the limit of the sequence \[\alpha_\omega, \sqrt{\alpha_1^{2^1} + \alpha_\omega^{2^1}}, \sqrt{\alpha_1^{2^1} + \sqrt{\alpha_2^{2^2} + \alpha_\omega^{2^2}}}, \sqrt{\alpha_1^{2^1} + \sqrt{\alpha_2^{2^2} + \sqrt{\alpha_3^{2^3} + \alpha_\omega^{2^3}}}}, \dotsc\]

By definition, these approximants are equal to \[\kap_i(\alpha_i(1-[0<i<\omega])), \kap_i(\alpha_i(1-[1<i<\omega])), \kap_i(\alpha_i(1-[2<i<\omega])), \dotsc\] where we use Iverson Bracket notation.

The infinite radicals that can be expressed using this notation are more general than the nested radicals considered by Herschfeld and many others. Namely, the sequence \(\alpha_i\) can be continued ``past infinity''. For this reason, we will refer to this more general class of infinite radicals as \emph{transfinite radicals}.

The definition assumes that the sequence is indexed up to and including the ordinal \(\omega\). It \emph{is} possible to generalise the definition further to allow for arbitrarily large ordinals, but we won't do this for the following reasons:

\begin{itemize}
\tightlist
\item
  The results we prove won't change.
\item
  The definitions may become more confusing.
\item
  We may be asked to provide a constructive theory of the ordinal numbers, which is not the goal of this paper.
\end{itemize}

Some examples of transfinite radicals expressible using this notation include:

\begin{itemize}
\tightlist
\item
  \(\kap_{i}(2) = \sqrt{2^{2^1} + \sqrt{2^{2^2} + \sqrt{2^{2^3} + \dotsb}}}\).
\item
  \(\kap_{i}(x^{[i = \omega]})\) where \(x\) is arbitrary. Here, we raise \(x\) to the power of an Iverson Bracket. By definition, this transfinite radical is the limit of the sequence \(x, \sqrt{1 + x^2}, \sqrt{1 + \sqrt{1 + x^4}}, \sqrt{1 + \sqrt{1 + \sqrt{1 + x^8}}},\dotsc\) We will make two points about this example:

  \begin{itemize}
  \tightlist
  \item
    There is no way of viewing this transfinite radical as an ordinary infinite radical.
  \item
    At some point, we will call this function of \(x\) the \(U\) function.
  \end{itemize}
\end{itemize}

Our operator differs from the one used by \cite{continuedReciprocalRoots} and \cite{kersten1992entwicklungssaetze}: Their notation is \[\mathop{\mathrm{\text{\Large{$K$}}}}_{i}\sqrt{\alpha_i},\] which stands for \(\sqrt{\alpha_1 + \sqrt{\alpha_2 + \sqrt{\alpha_3 + \dotsc}}}\). However, their notation is not appropriate for expressing transfinite radicals.

\hypertarget{radical-capped-implies-radical-converges}{%
\section{Radical capped implies radical converges}\label{radical-capped-implies-radical-converges}}

\begin{lemma}
\hypertarget{lemma-4}{%
\label{lemma-4}}

The function \(x \mapsto x^{1/n}\), \(\mathbb R^+ \to \mathbb R^+\) with \(n > 1\) is a concave function.

\end{lemma}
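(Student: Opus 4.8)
The plan is to reduce the claim to the convexity of $t \mapsto t^n$ on $\mathbb{R}^+$, which is the more familiar statement. Concavity of $x \mapsto x^{1/n}$ means precisely that for all $x, y \in \mathbb{R}^+$ and $\lambda \in [0,1]$ we have $(\lambda x + (1-\lambda) y)^{1/n} \ge \lambda x^{1/n} + (1-\lambda) y^{1/n}$. Writing $x = u^n$ and $y = v^n$ — every $x, y \in \mathbb{R}^+$ arises this way, taking $u = x^{1/n}$ and $v = y^{1/n}$ — and using that raising non-negative reals to the $n$-th power is order-preserving, this inequality is equivalent to $\lambda u^n + (1-\lambda) v^n \ge (\lambda u + (1-\lambda) v)^n$, i.e.\ to the convexity of $t \mapsto t^n$ on $\mathbb{R}^+$. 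So it suffices to prove the latter.

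For the convexity of $t \mapsto t^n$ I would give a finitary argument rather than appeal to derivatives. By the weighted arithmetic–geometric mean inequality (equivalently Hölder's inequality with conjugate exponents $n$ and $n/(n-1)$), which is constructively unproblematic, one has for $u, v \ge 0$ and $\lambda \in [0,1]$ that $\lambda u + (1-\lambda) v = \lambda^{1-1/n}\bigl(\lambda^{1/n} u\bigr) + (1-\lambda)^{1-1/n}\bigl((1-\lambda)^{1/n} v\bigr) \le \bigl(\lambda + (1-\lambda)\bigr)^{1-1/n}\bigl(\lambda u^n + (1-\lambda) v^n\bigr)^{1/n}$, and raising both sides to the $n$-th power yields exactly $(\lambda u + (1-\lambda) v)^n \le \lambda u^n + (1-\lambda) v^n$. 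In the case $n = 2$ relevant to square roots this collapses to the identity $\lambda x + (1-\lambda) y - \bigl(\lambda \sqrt{x} + (1-\lambda)\sqrt{y}\bigr)^2 = \lambda(1-\lambda)\bigl(\sqrt{x} - \sqrt{y}\bigr)^2 \ge 0$, which could be used instead.

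As an alternative write-up for a reader willing to differentiate, one simply computes $\frac{d^2}{dx^2}\, x^{1/n} = \frac{1}{n}\left(\frac{1}{n} - 1\right) x^{1/n - 2}$, which is $\le 0$ on $(0, \infty)$ because $n > 1$; since $x \mapsto x^{1/n}$ is continuous at $0$, concavity on $(0,\infty)$ extends to all of $\mathbb{R}^+$.

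The main obstacle here is bookkeeping rather than depth. First, $n$ is only assumed to be a real number greater than $1$, not an integer, so $t^n$ and $t^{1/n}$ must be read through $\exp$ and $\log$, and the monotonicity facts invoked (that $t \mapsto t^n$ and $t \mapsto t^{1/n}$ are increasing on $\mathbb{R}^+$) should be cited at that level of generality. Second, there is the boundary point $x = 0$: the inequality-based argument handles it directly, since the displayed inequalities remain valid when $u = 0$ or $v = 0$ (and when $\lambda \in \{0,1\}$), whereas the differentiation argument needs the extra remark that concavity on the open half-line together with continuity at the endpoint gives concavity on $\mathbb{R}^+$.
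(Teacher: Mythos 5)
Your proposal is correct but takes a genuinely different route from the paper. The paper's entire proof of this lemma is a one-liner: compute the second derivative of $x^{1/n}$, observe it is non-positive, and invoke the standard criterion. Your primary argument instead reduces concavity of $x\mapsto x^{1/n}$ to convexity of $t\mapsto t^n$ by an order-preserving change of variables, and then proves the latter by a weighted AM--GM (or H\"older) estimate. Your approach is more elementary and more obviously constructive; somewhat ironically, it is also more in the spirit of the paper's own ``absence of differentiation'' theme than the paper's actual proof of this lemma, which does differentiate. Your approach also handles the boundary point $x=0$ directly and does not tacitly require smoothness, whereas the paper's one-liner leaves the extension from $(0,\infty)$ to $[0,\infty)$ implicit. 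What the paper's version buys is brevity: for this lemma, which is standard and peripheral, the one-line derivative argument is arguably proportionate. Your remark that $n$ need not be an integer (so $t^n$ should be read via $\exp$ and $\log$) is a fair observation about the statement as written, though in all uses downstream the exponent is a power of $2$, so the integer case would have sufficed.
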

\begin{proof}

\hypertarget{proof-4}{%
\label{proof-4}}
The second derivative is negative which implies concavity.

\end{proof}
\begin{lemma}
\hypertarget{lemma-5}{%
\label{lemma-5}}

Given a sequence \((\alpha_k)_{k=1}^n\) of non-negative numbers, a pair of numbers \(l < u\), and a natural number \(I\), we have that \(\kap_i([i < I]\alpha_i + [i = I]u) - \kap_i([i < I]\alpha_i + [i = I]l) \leq u - l\).

\end{lemma}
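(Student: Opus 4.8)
The plan is to unwind the definition of $\kap$ so as to rewrite $\kap_i([i<I]\alpha_i + [i=I]t)$, for a free non-negative parameter $t$, as an explicit finite nested radical, and then to read the desired inequality directly off Lemma~\ref{lemma-3}. Throughout, let $h(x) = \sqrt x$, which is non-decreasing and, by Lemma~\ref{lemma-4} with $n = 2$, concave.

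First I would observe that the ordinal-indexed sequence $i \mapsto [i<I]\alpha_i + [i=I]t$ vanishes at every index $i > I$, in particular at $i = \omega$. Consequently every approximant of index $\ge I$ in the sequence defining $\kap$ is one and the same finite radical, so the limit exists trivially and
$$\kap_i\bigl([i<I]\alpha_i + [i=I]t\bigr) = g(t) := h\bigl(\alpha_1^{2} + h(\alpha_2^{4} + \dots + h(\alpha_{I-1}^{2^{I-1}} + t^{2^{I-1}}))\bigr),$$
an expression with $I-1$ nested occurrences of $h$. The exponent $2^{I-1}$ on $t$ appears because the slot at position $I$ carries exponent $2^{I}$ in the definition of $\kap$, and exactly one square root is taken before this quantity meets the coefficient $\alpha_{I-1}^{2^{I-1}}$. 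For $I = 1$ the right-hand side degenerates to $g(t) = t$, so that the claim reads $u - l \le u - l$; henceforth assume $I \ge 2$.

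Next I would apply Lemma~\ref{lemma-3} to the function $h$, with the role of $n$ played by $I-1$, with the two length-$(I-1)$ sequences $(0,\dots,0)$ and $(\alpha_1^{2},\alpha_2^{4},\dots,\alpha_{I-1}^{2^{I-1}})$, which satisfy the hypothesis $0 \le \alpha_k^{2^k}$, and with the pair $u^{2^{I-1}} > l^{2^{I-1}}$ (strict because $u > l \ge 0$ and $x \mapsto x^{2^{I-1}}$ is strictly increasing on $\mathbb R^+$). On the ``all-zeros'' side the $I-1$ nested square roots collapse the innermost value into a single $2^{I-1}$-th root, giving $(u^{2^{I-1}})^{2^{-(I-1)}} - (l^{2^{I-1}})^{2^{-(I-1)}} = u - l$; on the other side one obtains exactly $g(u) - g(l)$. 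Hence Lemma~\ref{lemma-3} yields $u - l \ge g(u) - g(l)$, which is the assertion of the lemma.

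The only step that needs genuine care is the identification in the second paragraph: aligning the powers $2^{k}$ in the definition of $\kap$ with the nesting depth, and checking that a value inserted in the finite slot $I$ contributes $t^{2^{I-1}}$ after a single square root, so that what remains is honestly an $(I-1)$-fold nested radical with coefficients $\alpha_k^{2^k}$. Granting that bookkeeping, the estimate falls out of Lemma~\ref{lemma-3} with no additional computation.
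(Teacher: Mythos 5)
Your proposal is correct and uses the same route as the paper: both apply Lemma~\ref{lemma-3} with $h(x)=\sqrt{x}$, $a_k=0$, $b_k=\alpha_k^{2^k}$, and the innermost pair $u^{2^{I-1}}>l^{2^{I-1}}$, so that the all-zeros side telescopes to $u-l$. Your write-up is merely more explicit about the bookkeeping (matching the exponent $2^{I-1}$ with $I-1$ levels of nesting and treating the degenerate case $I=1$ separately), which the paper leaves implicit.
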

\begin{proof}

\hypertarget{proof-5}{%
\label{proof-5}}
This follows from Lemma \ref{lemma-3}, where \(h(x) = \sqrt{x}\), \(a_k = 0\) and \(b_k=\alpha_k^{2^k}\).

We get that \[\begin{aligned}
\sqrt{\alpha_1^{2^1} + \dots \sqrt{\alpha_n^{2^n} + u^{2^n}}} - \sqrt{\alpha_1^{2^1} + \dots \sqrt{\alpha_n^{2^n} + l^{2^n}}} &\leq \sqrt{0 + \dots \sqrt{0 + u^{2^n}}} - \sqrt{0 + \dots \sqrt{0 + l^{2^n}}}\\
&=u - l
\end{aligned}\]

\end{proof}
\begin{lemma}
\hypertarget{lemma-6}{%
\label{lemma-6}}

The \(\kap\) operator has the following properties:

\begin{enumerate}
\def\labelenumi{\arabic{enumi}.}
\tightlist
\item
  \(\alpha_i \leq \beta_i \implies \kap_i(\alpha_i) \leq \kap_i(\beta_i)\)
\item
  \(\kap_i(C\alpha_i) = C\kap_i(\alpha_i)\) where \(C\) is any nonnegative constant.
\item
  \(\kap_i(\alpha_i) \geq \alpha_n\) for any particular element of the sequence \(\alpha_n\).
\item
  \(\kap_i(1) = \phi\) where \(\phi\) denotes the Golden Ratio.
\item
  \(\kap_i(\alpha_i[i < n] + \beta_{i - n + 1}[i \geq n]) = \kap_i(\alpha_i[i < n] + [i = n](\kap_j(\beta_j^{2^n}))^{2^{-n}})\). This rule allows us to use recursion.
\item
  \(\kap_i(\alpha_i[i < n] + x[i=n]) = \kap_i(\alpha_i[i < n] + x[i=\omega])\). We will call this the \emph{shift rule}. It allows us to shift the last term to infinity.
\end{enumerate}

\end{lemma}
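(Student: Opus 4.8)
The plan is to prove all six properties by working directly with the finite approximants in the definition of $\kap$ — write $A_m$ for the $m$-th one, namely the radical $\sqrt{\alpha_1^{2^1} + \sqrt{\alpha_2^{2^2} + \dots + \sqrt{\alpha_m^{2^m} + \alpha_\omega^{2^m}}}}$ — and then passing to limits, reading each identity as ``whenever the $\kap$ on one side is defined, so is the other, with the asserted (in)equality''. Two elementary devices recur. First, the tail subradical $R_k := \sqrt{\alpha_k^{2^k} + R_{k+1}}$ satisfies $\kap_i(\alpha_i) = R_1$ and depends only on the entries at positions $\geq k$. Second, a \emph{depth-$k$ collapse}: if positions $k, k+1, \dots$ together with $\omega$ all carry $0$ except for a single value $v$ placed at position $k$ or at $\omega$, then the depth-$k$ subradical equals $v^{2^{k-1}}$, since $v^{2^k}$ then sits under exactly one square root relative to depth $k-1$.

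Properties 1--4 follow quickly. For 1, $+$ and $\sqrt{\cdot}$ are non-decreasing, so $\alpha_i \leq \beta_i$ gives $A_m^{(\alpha)} \leq A_m^{(\beta)}$ for every $m$, and this non-strict inequality survives the limit. For 2, pull $C$ out of $A_m^{(C\alpha)}$ layer by layer via $\sqrt{C^{2^k}X} = C^{2^{k-1}}\sqrt X$, starting from $(C\alpha_m)^{2^m}+(C\alpha_\omega)^{2^m} = C^{2^m}(\alpha_m^{2^m}+\alpha_\omega^{2^m})$ and working outward, to get $A_m^{(C\alpha)} = C\,A_m^{(\alpha)}$, then take $m\to\infty$. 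For 3, discard from $A_m$ (with $m\ge n$) every summand except $\alpha_n^{2^n}$ at depth $n$ and collapse to get $A_m \ge (\alpha_n^{2^n})^{2^{-n}} = \alpha_n$, which passes to the limit. For 4, with every entry equal to $1$ the approximants satisfy $A_1=\sqrt2$ and $A_{m+1}=\sqrt{1+A_m}$ for $m\ge1$, i.e.\ the classical golden-ratio iteration; its limit is the fixed point of $x\mapsto\sqrt{1+x}$, which is $\phi$ since $\phi^2=1+\phi$, and convergence is constructive because $|\sqrt{1+x}-\phi| = |x-\phi|/(\sqrt{1+x}+\phi) < \tfrac12|x-\phi|$ for $x\ge0$ (using $\sqrt{1+\phi}=\phi$) — alternatively, quote the ``radical capped implies radical converges'' results to get convergence and identify the limit as the unique fixed point.

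Properties 5 and 6 use the same machinery with the recursive structure in focus. For 6 (the shift rule), on both sides positions $1,\dots,n-1$ carry $\alpha_1,\dots,\alpha_{n-1}$ and everything from position $n$ onward vanishes except a single $x$ — placed at position $n$ on the left and at $\omega$ on the right — so by the depth-$n$ collapse $A_m$ equals the same value $\sqrt{\alpha_1^{2^1}+\dots+\sqrt{\alpha_{n-1}^{2^{n-1}}+x^{2^{n-1}}}}$ for all $m\ge n$ on each side, and the limits coincide. For 5, split the left-hand $\kap$ as $\sqrt{\alpha_1^{2^1}+\dots+\sqrt{\alpha_{n-1}^{2^{n-1}}+R_n}}$, identify the $\beta$-tail $R_n$ — by matching exponents layer by layer — with $\kap_j$ of the $\beta$-sequence after rescaling each entry's exponent so that $\beta_j$ carries the exponent it acquires at depth $n+j-1$ rather than at depth $j$, collapse the lone entry $c$ at position $n$ on the right to $c^{2^{n-1}}$, and equate the two tails to solve for $c$ (which pins down the superscripts in the stated formula). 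I expect the exponent bookkeeping in Property 5 to be the main obstacle — keeping straight which power of $2$ each shifted $\beta_j$ carries and how the $\alpha_\omega$/$\beta_\omega$ limit term propagates through the shift — together with the standing discipline that, constructively, each identity must be read as ``both sides defined $\Rightarrow$ equal'' rather than presupposing the limits exist.
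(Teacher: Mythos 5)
Your strategy --- unrolling the finite approximants $A_m$, using the depth-$k$ collapse, and passing to limits --- is the natural way to substantiate a lemma the paper dismisses as ``Obvious,'' and for items 1--4 and 6 your sketches are correct. Since the paper offers no proof, there is nothing to diverge from; you are simply supplying the argument the author omitted.

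For item 5, however, there is a genuine problem, and it sits exactly in the spot you flagged as ``the main obstacle.'' Carrying out the exponent-matching you describe does \emph{not} recover the superscripts in the stated formula. Matching the tail at depth $n$ against the $\kap_j$ convention forces $\gamma_j^{2^j} = \beta_j^{2^{n+j-1}}$, hence $\gamma_j = \beta_j^{2^{n-1}}$ and $\gamma_\omega = \beta_\omega^{2^{n-1}}$, so the tail limit is $\kap_j\bigl(\beta_j^{2^{n-1}}\bigr)$. Collapsing the single entry $c$ at position $n$ on the right contributes $c^{2^{n-1}}$ at depth $n-1$. Equating gives $c = \bigl(\kap_j(\beta_j^{2^{n-1}})\bigr)^{2^{-(n-1)}}$, not the stated $\bigl(\kap_j(\beta_j^{2^{n}})\bigr)^{2^{-n}}$, and these are genuinely different since $\kap$ is not power-homogeneous. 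Concretely, take $n = 2$, $\alpha_1 = 0$, $\beta_1 = \beta_2 = 1$, and all other entries (including $\beta_\omega$) zero: the left-hand side of Property 5 is $\sqrt{\smash[b]{\sqrt{1+1}}} = 2^{1/4}$, the paper's right-hand side evaluates to $2^{1/8}$, and the corrected right-hand side evaluates to $2^{1/4}$. So your parenthetical ``(which pins down the superscripts in the stated formula)'' is the one false step: the computation pins down $2^{n-1}$ and $2^{-(n-1)}$, not $2^{n}$ and $2^{-n}$, and you should record the corrected identity rather than present it as a verification of the printed one. (The downstream uses in Corollary~\ref{corollary-7.1} and Lemma~\ref{lemma-8} survive the correction --- the upper bound becomes $M_n\phi^{2^{-(n-1)}}$, which still tends to $M_n$ --- but the lemma as printed is not what your calculation proves.)
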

\begin{proof}

\hypertarget{proof-6}{%
\label{proof-6}}
Obvious.
\end{proof}

\begin{lemma}
\hypertarget{lemma-7}{%
\label{lemma-7}}

If \(\sup_k({\alpha_k}) = M\) then \(\kap_i(\alpha_i) \geq M\).

\end{lemma}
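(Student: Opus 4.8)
The plan is to reduce the statement to part~3 of Lemma~\ref{lemma-6}, which already records that \(\kap_i(\alpha_i) \geq \alpha_n\) for every individual term \(\alpha_n\) of the sequence. The only thing left to supply is the passage from ``\(\kap_i(\alpha_i)\) dominates every term'' to ``\(\kap_i(\alpha_i)\) dominates the supremum of the terms'', carried out in a constructively safe way.

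First I would unpack the hypothesis \(\sup_k(\alpha_k) = M\). Constructively this asserts, among other things, that \(M\) is approximated from below by the sequence: for every \(\epsilon > 0\) there is an index \(k\) with \(\alpha_k > M - \epsilon\). (We will not need the complementary half of the supremum property, that \(M\) is an upper bound.) Then, fixing \(\epsilon > 0\) and choosing such a \(k\), part~3 of Lemma~\ref{lemma-6} gives \(\kap_i(\alpha_i) \geq \alpha_k\), whence \(\kap_i(\alpha_i) > M - \epsilon\). Since \(\epsilon > 0\) was arbitrary, we conclude \(\kap_i(\alpha_i) \geq M\), invoking the standard fact that a real number exceeding \(M - \epsilon\) for every \(\epsilon > 0\) is \(\geq M\); this implication is constructively valid and needs no appeal to LPO or Markov's principle.

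I do not expect a real obstacle here: the argument is essentially a one-liner on top of Lemma~\ref{lemma-6}. The single point deserving care is constructive hygiene — we must use only the ``approximated from below'' content of the supremum hypothesis together with the \(\epsilon\)-characterisation of \(\geq\), rather than trying to exhibit one index \(k\) that attains the supremum (which need not exist) or arguing by contradiction from \(\kap_i(\alpha_i) < M\).
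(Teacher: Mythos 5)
Your argument coincides with the paper's: fix \(\epsilon > 0\), use the approximated-from-below half of the supremum hypothesis to get some \(\alpha_N > M - \epsilon\), apply part~3 of Lemma~\ref{lemma-6} to conclude \(\kap_i(\alpha_i) \geq \alpha_N > M - \epsilon\), and let \(\epsilon \to 0\). The additional commentary on which half of the supremum property is used and why the final step is constructively harmless is sound, but the underlying proof is the same one-liner as in the paper.
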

\begin{proof}

\hypertarget{proof-7}{%
\label{proof-7}}
Let \(\epsilon > 0\).

From the fact that \(\sup_k({\alpha_k}) = M\), there is some \(\alpha_N\) such that \(\alpha_N > M - \epsilon\). It follows that \(\kap_i(\alpha_i) \geq \alpha_N > M - \epsilon\). Since \(\epsilon\) is arbitrary, we have that \(\kap_i(\alpha_i) \geq M\).

\end{proof}
\begin{corollary}
\label{corollary-7.1}
If we have a sequence \((M_n)_n\) where each \(M_n = \sup_{k \geq n} \alpha_k\), then for each \(n\) we have a lower bound \(\kap_i(\alpha_i[i < n] + M_n[i = n])\).
\end{corollary}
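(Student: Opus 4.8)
The plan is to establish the implicit claim behind the phrase ``lower bound'', namely that
\[
\kap_i(\alpha_i[i<n] + M_n[i=n]) \leq \kap_i(\alpha_i),
\]
so that the left-hand side genuinely underestimates the transfinite radical. The guiding idea is to recognise $M_n$ as a lower bound for the \emph{tail} of the radical beginning at index $n$, and then to feed that tail estimate back into the whole radical at depth $n$ by means of the recursion rule, item 5 of Lemma \ref{lemma-6}.

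Concretely, I would first apply that recursion rule with the tail sequence $\beta_j := \alpha_{j+n-1}$, obtaining
\[
\kap_i(\alpha_i) = \kap_i\bigl(\alpha_i[i<n] + [i=n]\,(\kap_j(\beta_j^{2^n}))^{2^{-n}}\bigr).
\]
Next, since $j \mapsto j+n-1$ enumerates exactly the indices $k \geq n$, we have $\sup_j \beta_j = \sup_{k\geq n}\alpha_k = M_n$, and because $x \mapsto x^{2^n}$ is continuous and increasing on $\mathbb R^+$ this gives $\sup_j \beta_j^{2^n} = M_n^{2^n}$. Lemma \ref{lemma-7} then yields $\kap_j(\beta_j^{2^n}) \geq M_n^{2^n}$, and hence $(\kap_j(\beta_j^{2^n}))^{2^{-n}} \geq M_n$. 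Finally, the two sequences $\alpha_i[i<n] + M_n[i=n]$ and $\alpha_i[i<n] + (\kap_j(\beta_j^{2^n}))^{2^{-n}}[i=n]$ agree at every index except $n$, where the former is no larger than the latter, so applying monotonicity of $\kap$ (item 1 of Lemma \ref{lemma-6}) to the displayed identity completes the argument.

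There is essentially no serious obstacle here; the only step that needs a moment's care is the exponent bookkeeping, since the recursion rule produces $\kap_j(\beta_j^{2^n})$ rather than $\kap_j(\beta_j)$, so one must note that the supremum commutes with the $2^n$-th power before invoking Lemma \ref{lemma-7}. It is also worth observing that the existence of each $M_n$ as a supremum is already part of the hypothesis, so no constructive subtlety about suprema intrudes.
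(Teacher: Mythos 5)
Your proof is correct and follows essentially the same route as the paper: it uses the recursion rule from Lemma \ref{lemma-6}(5) to peel off the tail $\beta_j = \alpha_{j+n-1}$, invokes Lemma \ref{lemma-7} to get $\kap_j(\beta_j^{2^n}) \geq M_n^{2^n}$, and then applies monotonicity of $\kap$. Incidentally, you write $\sup_j \beta_j = M_n$ (with equality) where the paper writes $\leq$; yours is the correct version, since Lemma \ref{lemma-7} requires the supremum to equal the bound, so the paper's $\leq$ is evidently a typo.
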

\begin{proof}

\hypertarget{proof-8}{%
\label{proof-8}}
Let \(\beta_j = \alpha_{j+n-1}\). We have that \(\sup_j\beta_j \leq M_n\). It therefore follows that \(\sup_j\beta_j^{2^n} \leq M_n^{2^{n}}\). Therefore by Lemma \ref{lemma-7}, we have \(\kap_j(\beta_j^{2^n}) \geq M_n^{2^n}\). We finally have that \[\begin{aligned}
\kap_i(\alpha_i) &= \kap_i(\alpha_i[i < n] + \beta_{i - n + 1}[i \geq n])\\
&=\kap_i(\alpha_i[i < n] + [i = n](\kap_j(\beta_j^{2^n}))^{2^{-n}})\\
&\geq \kap_i(\alpha_i[i < n] + [i = n]M_n)\\
\end{aligned}\] \end{proof}
\begin{lemma}
\hypertarget{lemma-8}{%
\label{lemma-8}}

If we have a sequence \((M_n)_n\) where each \(M_n = \sup_{k \geq n} \alpha_k\), then for each \(n\) we have an upper bound \(\kap_i(\alpha_i[i < n] + M_n\phi^{2^{-n}}[i = n])\).

\end{lemma}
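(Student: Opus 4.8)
The plan is to run the proof of Corollary \ref{corollary-7.1} ``in reverse'': instead of pushing the tail of the radical \emph{down} to the floor $M_n$, we push it \emph{up} to the cap $M_n\phi^{2^{-n}}$. As there, set $\beta_j = \alpha_{j+n-1}$, so that $\{\beta_j\}_j$ is exactly the set $\{\alpha_k\}_{k\geq n}$ and hence $\sup_j \beta_j = \sup_{k\geq n}\alpha_k = M_n$. The whole argument is then an application of Lemma \ref{lemma-6}.

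First I would bound the tail radical $\kap_j(\beta_j^{2^n})$ from above. Since $t\mapsto t^{2^n}$ is increasing on $\mathbb R^+$, we have $\beta_j^{2^n}\leq M_n^{2^n}$ for every $j$, so by parts 1, 2 and 4 of Lemma \ref{lemma-6} (monotonicity, positive homogeneity, and $\kap_j(1)=\phi$),
\[ \kap_j(\beta_j^{2^n}) \;\leq\; \kap_j(M_n^{2^n}) \;=\; M_n^{2^n}\,\kap_j(1) \;=\; M_n^{2^n}\,\phi. \]
Taking $2^n$-th roots gives $\bigl(\kap_j(\beta_j^{2^n})\bigr)^{2^{-n}} \leq \bigl(M_n^{2^n}\phi\bigr)^{2^{-n}} = M_n\,\phi^{2^{-n}}$.

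Then I would feed this into the recursion rule (part 5 of Lemma \ref{lemma-6}):
\[ \kap_i(\alpha_i) \;=\; \kap_i\bigl(\alpha_i[i<n] + \beta_{i-n+1}[i\geq n]\bigr) \;=\; \kap_i\bigl(\alpha_i[i<n] + [i=n]\,(\kap_j(\beta_j^{2^n}))^{2^{-n}}\bigr), \]
and apply monotonicity (part 1) once more, replacing the $[i=n]$ coordinate by the larger quantity $M_n\phi^{2^{-n}}$, to conclude
\[ \kap_i(\alpha_i) \;\leq\; \kap_i\bigl(\alpha_i[i<n] + M_n\,\phi^{2^{-n}}[i=n]\bigr), \]
which is the claimed upper bound.

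I do not expect a genuine obstacle. In fact this direction is \emph{easier} than Corollary \ref{corollary-7.1}: the lower bound needed Lemma \ref{lemma-7} (the nontrivial inequality $\sup_k\alpha_k = M \Rightarrow \kap_i(\alpha_i)\geq M$), whereas here we only use the trivial pointwise bound $\beta_j\leq M_n$ together with homogeneity and $\kap_j(1)=\phi$. The only thing requiring a moment's care is the bookkeeping with the Iverson brackets and the reindexing $\beta_j=\alpha_{j+n-1}$, plus noting that the $2^n$-th power and root commute with the supremum here because all quantities are non-negative.
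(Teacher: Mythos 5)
Your proof is correct and uses exactly the same ingredients as the paper's (parts 1, 2, 4 and 5 of Lemma \ref{lemma-6}); the only difference is cosmetic ordering — the paper first replaces $\alpha_i$ pointwise by $\alpha_i[i<n] + M_n[i\geq n]$ and then applies the recursion rule to the constant tail, whereas you apply the recursion rule to the original tail and then bound the resulting $(\kap_j(\beta_j^{2^n}))^{2^{-n}}$ from above. Same argument, essentially.
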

\begin{proof}

\hypertarget{proof-9}{%
\label{proof-9}}
\[\begin{aligned}
\alpha_i &\leq \alpha_i[i < n] + M_n[i \geq n] \\
\therefore \kap_i(\alpha_i) &\leq \kap_i(\alpha_i[i < n] + M_n[i \geq n]) \\
&= \kap_i(\alpha_i[i < n] + (\kap_j(M_n^{2^n}))^{2^{-n}}[i = n])\\
&= \kap_i(\alpha_i[i < n] + M_n(\kap_j(1))^{2^{-n}}[i = n])\\
&= \kap_i(\alpha_i[i < n] + M_n\phi^{2^{-n}}[i=n])\\
\end{aligned}\]

\end{proof}

\begin{theorem}
\hypertarget{theorem-sufficient}{%
\label{theorem-sufficient}}

Given the infinite sequence \((\alpha_k)_{k=1}^\infty\) where each \(\alpha_k\) is a non-negative real number, if there exists an infinite sequence \((M_k)_{k=1}^\infty\) such that \(M_n = \sup_{k=n}^\infty \alpha_k\), then \(\kap_i(\alpha_i)\) converges.
\end{theorem}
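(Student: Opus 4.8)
The plan is to show that the sequence of approximants defining \(\kap_i(\alpha_i)\) is Cauchy with an explicitly computable modulus of convergence, which constructively is exactly what ``\(\kap_i(\alpha_i)\) converges'' means, and which sidesteps any appeal to the Monotone Convergence Theorem. Write \(x_n\) for the \(n\)-th approximant; since here the tail (and \(\alpha_\omega\)) is zero, \(x_n = \kap_i(\alpha_i[i \le n])\) is the concrete finite radical \(\sqrt{\alpha_1^{2} + \sqrt{\alpha_2^{4} + \dots + \sqrt{\alpha_n^{2^n}}}}\). Fix the squeezing bounds supplied by Corollary \ref{corollary-7.1} and Lemma \ref{lemma-8}: for each \(n\) put \(L_n = \kap_i(\alpha_i[i<n] + M_n[i=n])\) and \(U_n = \kap_i(\alpha_i[i<n] + M_n\phi^{2^{-n}}[i=n])\). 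These are genuine real numbers (the approximants defining them are eventually constant, since every term past position \(n\) is \(0\)), and Lemma \ref{lemma-5} — applied with \(h = \sqrt{\,\cdot\,}\), the finite sequence \(\alpha_1,\dots,\alpha_{n-1}\), and the pair \(l = M_n\), \(u = M_n\phi^{2^{-n}}\) — gives \(U_n - L_n \le M_n(\phi^{2^{-n}} - 1) \le M_1(\phi^{2^{-n}} - 1)\), where \(M_1 = \sup_k \alpha_k\) exists by hypothesis. Since \(\phi^{2^{-n}} - 1 \le 2^{-n}\phi\ln\phi \le 2^{-n}\) for \(n \ge 1\), the width \(U_n - L_n\) tends to \(0\) with a modulus we can write down.

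Next I would sandwich the approximants between \(L_n\) and \(U_n\) up to an arbitrarily small slack. For the upper side: fix \(n\); for every \(m \ge n\) there is a termwise inequality of sequences \(\alpha_i[i \le m] \le \alpha_i[i<n] + M_n[i \ge n]\) (use \(\alpha_i \le M_n\) for \(n \le i \le m\), and \(0 \le M_n\) for \(i>m\)), so by monotonicity of \(\kap\) (Lemma \ref{lemma-6}, item 1) together with the identity \(\kap_i(\alpha_i[i<n] + M_n[i\ge n]) = U_n\) established inside the proof of Lemma \ref{lemma-8}, we get \(x_m \le U_n\) for all \(m \ge n\). For the lower side: fix \(n\) and \(\epsilon > 0\); since \(M_n = \sup_{k\ge n}\alpha_k\) we may pick \(N \ge n\) with \(\alpha_N > M_n - \epsilon\), and then for \(m \ge N\) the single occurrence of \(\alpha_N\) inside \(x_m\) already forces \(x_m \ge \kap_i(\alpha_i[i<n] + \alpha_N[i=N]) = \kap_i(\alpha_i[i<n] + \alpha_N[i=n])\) — the inequality by Lemma \ref{lemma-6}, item 1, after zeroing all other terms, and the equality because the \(N-n\) nested square roots collapse \(\alpha_N^{2^N}\) to \(\alpha_N^{2^{n-1}}\) (equivalently, the recursion rule, Lemma \ref{lemma-6}, item 5). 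One more application of Lemma \ref{lemma-5}, now with \(l = \alpha_N\) and \(u = M_n\), yields \(L_n - \kap_i(\alpha_i[i<n]+\alpha_N[i=n]) \le M_n - \alpha_N < \epsilon\), hence \(x_m > L_n - \epsilon\) for all \(m \ge N\).

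Finally I would assemble the Cauchy estimate. Given a target \(\delta > 0\): choose \(n\) with \(M_1 2^{-n} < \delta/2\) (so \(U_n - L_n < \delta/2\) by the first paragraph), then choose \(N \ge n\) as above with \(\epsilon = \delta/4\). For all \(m, m' \ge N\), both \(x_m\) and \(x_{m'}\) lie in the interval \((L_n - \delta/4,\ U_n]\), whose length is \(U_n - L_n + \delta/4 < \delta\); therefore \(|x_m - x_{m'}| < \delta\). Since every choice above is effective given the data \((\alpha_k)_k\) and \((M_k)_k\), this exhibits \((x_n)_n\) as a Cauchy sequence with an explicit modulus, so \(\kap_i(\alpha_i)\) converges.

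I expect the main obstacle to be the lower bound on the approximants in the middle step: the crude estimate \(x_m \ge \alpha_N\) coming straight from Lemma \ref{lemma-6}, item 3, is far too weak, and the real content is transplanting the large term \(\alpha_N\) from its deep position \(N\) up to position \(n\) — where it can be compared against \(M_n\) via Lemma \ref{lemma-5} — at no loss, which is exactly what the collapse of nested square roots (the recursion rule) achieves. A secondary point needing care is constructivity throughout: each existential, namely extracting \(N\) from the supremum \(M_n\) and extracting \(n\) from \(\phi^{2^{-n}} \to 1\), must be witnessed effectively, and it is, precisely because the hypothesis hands us the values \(M_n\).
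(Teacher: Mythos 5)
Your proof is correct and follows the paper's outline — sandwich via Corollary \ref{corollary-7.1} and Lemma \ref{lemma-8}, collapse the gap via Lemma \ref{lemma-5} — but it tightens it: where the paper writes \(\kap_i(\alpha_i)\ge L_n\) and \(\kap_i(\alpha_i)\le U_n\), silently treating the transfinite radical as an already-defined number, you work at the level of the finite approximants \(x_m\), pinning every tail of the sequence into \((L_n-\epsilon,\,U_n]\) and reading off an explicit Cauchy modulus. That extra care (in particular the transplant of \(\alpha_N\) from position \(N\) to position \(n\) so it can be played against \(M_n\) via Lemma \ref{lemma-5}) removes a mild circularity in the paper's version and is exactly what the constructive reading of ``converges'' demands.
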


\begin{proof}

\hypertarget{proof-10}{%
\label{proof-10}}

By Corollary \ref{corollary-7.1}, we have \(\kap_i(\alpha_i) \geq\kap_i(\alpha_i[i < n] + M_n[i = n])\).

By Lemma \ref{lemma-8}, we have \(\kap_i(\alpha_i) \leq\kap_i(\alpha_i[i < n] + M_n\phi^{2^{-n}}[i = n])\).

The difference between upper bound and lower bound is by definition \(\kap_i(\alpha_i[i < n] + M_n\phi^{2^{-n}}[i = n]) - \kap_i(\alpha_i[i < n] + M_n[i = n])\), which by Lemma \ref{lemma-5} is at most \(M_n(\phi^{2^{-n}}-1)\), which clearly goes to zero.

\end{proof}

\hypertarget{the-function-ur}{%
\section{\texorpdfstring{The function \(U(r)\)}{The function U(r)}}\label{the-function-ur}}

\includegraphics{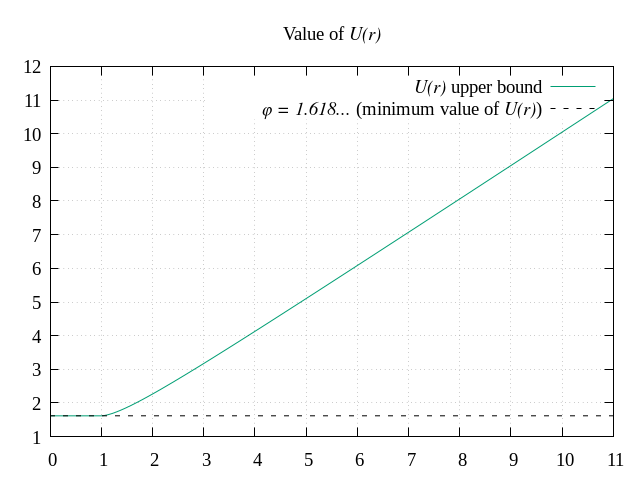} \includegraphics{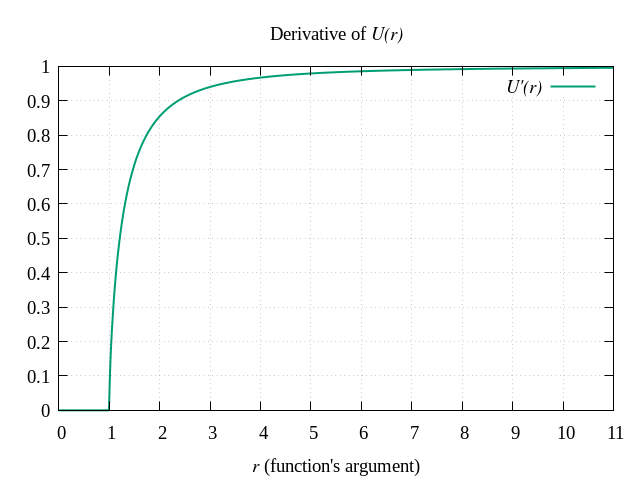}

In this section, we define an interesting function called \(U(r)\), and investigate some of its properties. We will use it to prove Theorem 2. It is defined as \[U(r) = \kap_i([i < \omega] + r[i = \omega])\] It follows from Theorem 1 that the function is well-defined for all \(r \geq 0\).

\begin{lemma}
\hypertarget{lemma-9}{%
\label{lemma-9}}

For \(s \in \mathbb R\) and \(r \in \mathbb R\), if \(s > r \geq 1\), then \(U(s) > U(r)\).

\end{lemma}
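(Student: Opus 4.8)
The plan is to reduce the strict monotonicity to a crude size comparison, using two facts about $U$ that follow from what is already available. First I would record the functional identity $U(r) = \sqrt{1 + U(r^2)}$, valid for every $r \geq 0$: if $P_k(r)$ denotes the $k$-th approximant in the definition $U(r) = \kap_i([i<\omega] + r[i=\omega])$ — the nested radical with $k$ square roots and innermost entry $r^{2^k}$ — then $P_{k+1}(r) = \sqrt{1 + P_k(r^2)}$, and letting $k \to \infty$ (the limit exists by Theorem \ref{theorem-sufficient}, and $t \mapsto \sqrt{1+t}$ is continuous) yields the identity; it can also be read off the recursion rule of Lemma \ref{lemma-6}. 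Second I would record the crude linear bounds $r \leq U(r) \leq r\phi$ for $r \geq 1$: the lower bound is part 3 of Lemma \ref{lemma-6} (applied to the $\omega$-th entry, or simply because $r = P_0(r)$ and the approximants increase), and the upper bound comes from parts 1, 2 and 4 of Lemma \ref{lemma-6}, since for $r \geq 1$ the sequence $[i<\omega] + r[i=\omega]$ is dominated termwise by the constant sequence $r$ and $\kap_i(r) = r\kap_i(1) = r\phi$.

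With these in hand, the proof is to push the comparison outward. From the identity and the strict monotonicity of $t \mapsto \sqrt{1+t}$, one has for all $a, b \geq 0$ that $U(a^2) > U(b^2)$ implies $U(a) > U(b)$. Hence it suffices to produce a single $N \geq 0$ with $U(s^{2^N}) > U(r^{2^N})$, and then descend on $N$ (using $s^{2^{k}} = (s^{2^{k-1}})^2$ at each step) down to $U(s) > U(r)$. To produce $N$: since $s > r \geq 1$ we have $s/r > 1$, so $(s/r)^{2^N} \to \infty$; concretely, a positive rational lower bound $\delta$ for $s/r - 1$ together with Bernoulli's inequality lets us pick $N$ with $2^N \delta > \phi$, whence $(s/r)^{2^N} > \phi$, i.e. $s^{2^N} > \phi\, r^{2^N}$. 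Because $r \geq 1$ forces $r^{2^N} \geq 1$, the two linear bounds then give $U(s^{2^N}) \geq s^{2^N} > \phi\, r^{2^N} \geq U(r^{2^N})$, completing the argument.

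The step I expect to be the real obstacle is conceptual: one must notice that $U$ is genuinely flat on $[0,1]$ — in fact $U \equiv \phi$ there — so strictness is false near $0$ and cannot be obtained from any local or ``first approximant'' estimate; the whole point of the squaring device is that after finitely many squarings the argument has moved into the region $[1,\infty)$ where the linear bounds actually separate, and it is precisely the line ``$r^{2^N} \geq 1$'' that uses the hypothesis $r \geq 1$. The only other thing to keep honest is constructivity: ``$s > r$'' supplies the positive rational $\delta$ above, so $N$ is obtained by an explicit computation rather than an appeal to the Archimedean property as a limited principle, and each remaining inequality is a direct comparison of constructively presented reals; the continuity invoked in the functional identity is just the uniform continuity of $\sqrt{1+\cdot}$ on the bounded set $[0, r\phi]$, which is constructively unproblematic.
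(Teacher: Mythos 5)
Your proof is correct and constructive, but it routes through a different piece of structure than the paper does. The paper's proof reuses the sandwich bounds already established: Corollary \ref{corollary-7.1} (the lower bound $U(s) \geq \kap_i([i<n] + s[i=n])$ for $s\geq 1$) and Lemma \ref{lemma-8} (the upper bound $U(r) \leq \kap_i([i<n] + r\phi^{2^{-n}}[i=n])$ for $r\geq 1$), then observes that $\phi^{2^{-n}} \to 1$, so once $s > r\phi^{2^{-n}}$ the two bounds are strictly separated by the monotonicity of the finite radical $\kap_i([i<n]+x[i=n])$ in $x$. (The citations to Corollary \ref{corollary-7.1} and Lemma \ref{lemma-8} appear to be swapped in the paper's proof text, but the intended argument is clear.) You instead isolate the functional equation $U(r) = \sqrt{1+U(r^2)}$ together with the crude two-sided bound $r \leq U(r) \leq r\phi$ on $[1,\infty)$, square until the linear bounds force $U(s^{2^N}) > U(r^{2^N})$, and descend using the strict monotonicity of $t\mapsto\sqrt{1+t}$. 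The threshold you select, $(s/r)^{2^N}>\phi$, is after taking logarithms the same cutoff $2^N\ln(s/r)>\ln\phi$ that the paper's condition $s > r\phi^{2^{-n}}$ amounts to; what differs is that the paper pushes the index $n$ inward in the transfinite radical, while you push the argument outward by repeated squaring and then pull back. Your route is a little longer but more self-contained and arguably more illuminating: the functional equation and the linear bounds are worth recording as facts about $U$ in their own right, and your explicit observation that $U\equiv\phi$ on $[0,1]$ --- which is exactly why the hypothesis $r\geq 1$ is needed and why the strictness cannot come from any single approximant --- is left implicit in the paper.
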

\begin{proof}

\hypertarget{proof-11}{%
\label{proof-11}}
If \(r \geq 1\), then Corollary \ref{corollary-7.1} gives us that \(U(r) \leq \kap_i([i < n] + r\phi^{2^{-n}}[i = n])\)

If \(s \geq 1\), then Lemma \ref{lemma-8} gives us that \(U(s) \geq \kap_i([i < n] + s[i = n])\)

Clearly for large enough \(n\), we have that \(s > r\phi^{2^{-n}}\). It follows that for this value of \(n\), we get: \[\begin{aligned}
U(s) &\geq \kap_i([i < n] + s[i = n]) 
\\ &> \kap_i([i < n] + r\phi^{2^{-n}}[i = n]) 
\\ &\geq U(r).
\end{aligned}\]

\end{proof}
\begin{corollary}
\label{corollary-9.1}
\(U:[1,\infty) \to [U(1), \infty)\) is continuous, unbounded, and admits an inverse function.
\end{corollary}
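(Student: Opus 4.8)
The plan is to assemble the statement from three ingredients, one of which --- strict monotonicity --- is already in hand as Lemma~\ref{lemma-9}; the inverse function will then drop out of the constructive intermediate value theorem for strictly monotone continuous functions. Nothing here should be delicate: each step reduces to a result already proved. I would dispense with unboundedness first. For $U(r) = \kap_i([i < \omega] + r[i = \omega])$ the $\omega$-th term of the defining sequence is exactly $r$, so property~3 of Lemma~\ref{lemma-6} gives $U(r) \geq r$; hence $U(r) \to \infty$ as $r \to \infty$, and combined with Lemma~\ref{lemma-9} this also confines the range of $U$ to $[U(1),\infty)$.

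Next, continuity, which I would obtain by reusing the sandwich already present inside the proof of Lemma~\ref{lemma-9}: when $r \geq 1$, the supremum of the sequence defining $U(r)$ from the $n$-th term onward equals $r$, so Corollary~\ref{corollary-7.1} and Lemma~\ref{lemma-8} give, for every $n$,
\[
\kap_i([i < n] + r[i = n]) \;\leq\; U(r) \;\leq\; \kap_i([i < n] + r\phi^{2^{-n}}[i = n]).
\]
Writing $G_n(x) = \kap_i([i < n] + x[i = n])$, the shift rule of Lemma~\ref{lemma-6} identifies $G_n$ with a finite nested radical $\sqrt{1 + \sqrt{1 + \dots + \sqrt{1 + x^{2^n}}}}$, so each $G_n$ is continuous. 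By Lemma~\ref{lemma-5} the width of the sandwich satisfies $G_n(r\phi^{2^{-n}}) - G_n(r) \leq r(\phi^{2^{-n}} - 1)$, which tends to $0$ uniformly for $r$ in any bounded interval $[1,R]$. Thus on $[1,R]$ the function $U$ lies uniformly within $R(\phi^{2^{-n}} - 1)$ of the continuous function $G_n$; a standard $\epsilon/3$ estimate then yields uniform continuity of $U$ on $[1,R]$, and hence continuity of $U$ on all of $[1,\infty)$.

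Finally, the inverse. $U$ is continuous on $[1,\infty)$, strictly increasing by Lemma~\ref{lemma-9} in the strong constructive sense (each inequality $U(s) > U(r)$ comes with an explicit witnessing index $n$), and satisfies $U(1) \leq U(r)$ together with $U(r) \to \infty$. Given any $y \geq U(1)$, choose $R$ with $U(R) \geq R > y$; the constructive intermediate value theorem for strictly monotone continuous functions produces a unique $x \in [1,R]$ with $U(x) = y$, and $y \mapsto x$ is the required (continuous, strictly increasing) inverse $U^{-1}\colon [U(1),\infty) \to [1,\infty)$.

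The step I expect to be the main obstacle is the continuity argument --- specifically, being careful that the bounds from Corollary~\ref{corollary-7.1} and Lemma~\ref{lemma-8} genuinely apply to the $\omega$-indexed sequence underlying $U$ (this is exactly why the domain must be restricted to $r \geq 1$: the cap $r$ sitting at the $\omega$-th position has to dominate the $1$'s in front of it), and that the convergence $U = \lim_n G_n$ is uniform on bounded sets rather than merely pointwise, since pointwise limits of continuous functions need not be continuous. Everything else is bookkeeping with results already established.
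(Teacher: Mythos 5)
Your argument is correct, and for unboundedness and the inverse it coincides with the paper's. The interesting divergence is in the continuity step. You and the paper start from the same sandwich $G_n(r) \le U(r) \le G_n(r\phi^{2^{-n}})$, but you then run a uniform-convergence-of-continuous-functions argument ($\epsilon/3$ on $[1,R]$), whereas the paper simply takes $n \to \infty$ in the chain
\[
U(s) - U(r) \;\le\; \kap_i([i<n] + \phi^{2^{-n}}s[i=n]) - \kap_i([i<n] + r[i=n]) \;\le\; \phi^{2^{-n}}s - r
\]
to conclude $U(s) - U(r) \le s - r$ directly, which with monotonicity (Lemma~\ref{lemma-9}) gives $\lvert U(s)-U(r)\rvert \le \lvert s-r\rvert$, i.e.\ global Lipschitz continuity with constant $1$. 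The paper's route is both shorter and strictly stronger: it never needs to introduce the auxiliary functions $G_n$, never needs uniform convergence machinery, and delivers a concrete modulus of continuity that is uniform on all of $[1,\infty)$ rather than only on bounded sets. Your route still works and has the mild pedagogical advantage of isolating exactly why pointwise convergence would not suffice, but it spends effort re-deriving what the Lemma~\ref{lemma-5} estimate already hands over in closed form. One small improvement you make: you derive unboundedness from $U(r) \ge r$ via Lemma~\ref{lemma-6}(3), which is cleaner than the paper's citation of Lemma~\ref{lemma-8} for the same fact.
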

\begin{proof}

\hypertarget{proof-12}{%
\label{proof-12}}
We shall prove Lipschitz continuity: Lipschitz continuity follows from the fact that for every \(n \in \mathbb N\) \[\begin{aligned}
U(s) - U(r) &\leq \kap_i([i < n] + \phi^{2^{-n}}s[i = n]) - \kap_i([i < n] + r[i = n])\\
&\leq \phi^{2^{-n}}s - r \qquad{\text{by Lemma \ref{lemma-5}}}
\end{aligned}\] which implies that \(|U(s) - U(r)| \leq |s - r|\).

The function is unbounded because \(U(r) > r\) (Lemma \ref{lemma-8}).

The existence of an inverse follows from the fact that the function is one-to-one, unbounded, and by the Intermediate Value Theorem.
\end{proof}
\hypertarget{radical-converges-implies-radical-capped}{%
\section{Radical converges implies radical capped}\label{radical-converges-implies-radical-capped}}

\begin{lemma}
\hypertarget{lemma-10}{%
\label{lemma-10}}

If we let \((\beta_i)\) denote the result of rearranging from smaller to larger two adjacent elements of \((\alpha_i)\), then \(\kap_i(\alpha_i) \geq \kap_i(\beta_i)\).

\end{lemma}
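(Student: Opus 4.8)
The plan is to localise the swap to two consecutive depths of the radical and then close the gap with a single application of the concavity of the square root. Suppose the two adjacent elements being rearranged are $\alpha_j$ and $\alpha_{j+1}$ with $j$ finite (there is no pair of adjacent positions straddling $\omega$). If $\alpha_j \le \alpha_{j+1}$ then $(\beta_i) = (\alpha_i)$ and the claim is trivial, so assume $\alpha_j > \alpha_{j+1}$; write $b = \alpha_j$ and $s = \alpha_{j+1}$, so that $b > s$, $\beta_j = s$, $\beta_{j+1} = b$, and $(\alpha_i)$ and $(\beta_i)$ agree at every index other than $j$ and $j+1$ (including at $\omega$). Put $p = 2^{j}$.

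Next I would collapse everything below depth $j+1$ into a single nonnegative number. Applying Lemma~\ref{lemma-6}(5) with $n = j+2$ replaces the tail $\alpha_{j+2},\alpha_{j+3},\dots$ by one term at position $j+2$, and the shift rule Lemma~\ref{lemma-6}(6) then moves that term to $\omega$, after which $\kap$ stabilises to a finite nested radical. The net effect is
\[ \kap_i(\alpha_i) = P\!\left(\sqrt{b^{p} + \sqrt{s^{2p} + W}}\right), \qquad \kap_i(\beta_i) = P\!\left(\sqrt{s^{p} + \sqrt{b^{2p} + W}}\right), \]
where $W \ge 0$ is a common ``tail value'' and $P(z) = \sqrt{\alpha_1^{2} + \sqrt{\alpha_2^{4} + \dots \sqrt{\alpha_{j-1}^{2^{j-1}} + z}}}$ is a common ``prefix operator''; $P$ is nondecreasing, being a composition of the nondecreasing maps $z \mapsto \sqrt{\alpha_k^{2^{k}} + z}$. (Should one wish to avoid presupposing convergence, the very same two expressions, with $W$ replaced by the common interior of the $m$-th approximant, may be compared approximant by approximant and the conclusion taken in the limit.)

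Since $P$ is nondecreasing it now suffices to show $\sqrt{b^{p} + \sqrt{s^{2p} + W}} \ge \sqrt{s^{p} + \sqrt{b^{2p} + W}}$. Squaring and rearranging, this is equivalent to $b^{p} - s^{p} \ge \sqrt{b^{2p} + W} - \sqrt{s^{2p} + W}$. That inequality is Lemma~\ref{lemma-3} with $n = 1$, $h = \sqrt{\,\cdot\,}$ (concave by Lemma~\ref{lemma-4}, and nondecreasing), $a_1 = 0 \le b_1 = W$, $u = b^{2p}$ and $l = s^{2p}$ (so $u > l$ because $b > s$): it gives $\sqrt{0 + b^{2p}} - \sqrt{0 + s^{2p}} \ge \sqrt{W + b^{2p}} - \sqrt{W + s^{2p}}$, i.e. $b^{p} - s^{p} \ge \sqrt{W + b^{2p}} - \sqrt{W + s^{2p}}$, as required. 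Applying $P$ to both sides then yields $\kap_i(\alpha_i) \ge \kap_i(\beta_i)$.

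The only genuine inequality is this last one-line concavity estimate; everything else is bookkeeping that extracts the common prefix $P$ and common tail $W$ from the definition of $\kap$ (or, more slickly, from Lemma~\ref{lemma-6}). I do not anticipate a real obstacle — the one point to watch is arranging the reduction so that it stays constructively valid, which the approximant version noted above guarantees.
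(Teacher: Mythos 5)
Your proposal is correct and arrives at the same reduction as the paper — isolate the swap, write both radicals as a common nondecreasing prefix applied to $\sqrt{\,\cdot^{2^j} + \sqrt{\,\cdot^{2^{j+1}} + W}\,}$ with a common nonnegative tail $W$, and prove the single two-level inequality $\sqrt{b^{p} + \sqrt{s^{2p} + W}} \ge \sqrt{s^{p} + \sqrt{b^{2p} + W}}$ for $b > s \ge 0$. The paper declares ``it suffices to consider only swapping the first two elements'' and leaves the prefix/tail bookkeeping implicit, whereas you spell it out via Lemma~\ref{lemma-6}; that is a presentational rather than mathematical difference. Where you genuinely diverge is in \emph{how} the core inequality is proved. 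The paper differentiates: it observes that $f' = (\sqrt{\cdot})'$ is decreasing, compares $f'(f^{-2}(A)+x)$ with $f'(f^{-2}(B)+x)$, and integrates from $0$ to $x$ to obtain the inequality before wrapping it in the outer square root. You instead square once, rearrange to $b^{p} - s^{p} \ge \sqrt{b^{2p}+W} - \sqrt{s^{2p}+W}$, and recognise this as exactly Lemma~\ref{lemma-3} with $n=1$ (equivalently, Lemma~\ref{lemma-1} for $h=\sqrt{\cdot}$). Your route reuses machinery the paper has already built, avoids the derivative entirely, and is more consistent with the paper's own stated philosophy of eschewing differentiation when estimating nested-radical differences; the paper's integration argument is self-contained but slightly out of step with that philosophy. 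Both are valid. One small point worth keeping in a final write-up is your parenthetical remark about doing the comparison approximant by approximant: this is what makes the argument clean when convergence of the tail is not presupposed, and the paper's proof elides it.
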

\begin{proof}

\hypertarget{proof-13}{%
\label{proof-13}}
This follows from the fact that the square-root is a convex, increasing function that maps nonnegative numbers to nonnegative numbers.

Let \(f(x) = \sqrt{x}\).

It suffices to consider only swapping the first two elements of the sequence \((\alpha_i)\). In other words, we define the sequence \((\beta_i)\) by \(\beta_i = \min(\alpha_1,\alpha_2)[i = 1] + \max(\alpha_1,\alpha_2)[i=2] + \alpha_i[i>2]\), and claim that \(\kap_i(\alpha_i) \geq \kap_i(\beta_i)\).

Finally our claim is that given \(A \leq B\) and arbitrary nonnegative \(x\), that \[f(f^{-1}(A) + f(f^{-2}(B) + x)) \leq f(f^{-1}(B) + f(f^{-2}(A) + x)).\]

We now prove it:

\[\begin{aligned}
A &\leq B\\
f^{-2}(A) &\leq f^{-2}(B) \qquad{\text{because $f$ is an increasing function}}\\
f^{-2}(A) + x&\leq f^{-2}(B) + x\\
f'(f^{-2}(A) + x)&\geq f'(f^{-2}(B) + x) \qquad{\text{because $f'$ is a decreasing function}}\\
f(f^{-2}(A) + x) - f^{-1}(A) &\geq f(f^{-2}(B) + x) - f^{-1}(B)\qquad{\text{by integrating from $0$ to $x$}}\\
f^{-1}(B) + f(f^{-2}(A) + x) &\geq f^{-1}(A) + f(f^{-2}(B) + x) \qquad{\text{by rearranging both sides}}\\
f(f^{-1}(B) + f(f^{-2}(A) + x)) &\geq f(f^{-1}(A) + f(f^{-2}(B) + x)) \qquad{\text{because $f$ is an increasing function}}\\
\end{aligned}\]

\end{proof}
Here, we are about to generalise the definition of a transfinite radical. We've avoided doing this up until now in order to keep things simple. All the results we've proved previously carry through.

\begin{definition}
For each finite set of ordinals \(\Omega=\{\iota_1, \iota_2, \dotsc, \iota_n\}\) where \(\iota_1 < \iota_2 < \dotsc < \iota_n\), let \(\kap_{i \in \Omega} (\alpha_i)\) denote \(\sqrt{\alpha_{\iota_1}^{2^1} + \sqrt{\alpha_{\iota_2}^{2^2} + \dotsc \sqrt{\alpha_{\iota_n}^{2^n}}}}\).
\end{definition}

Clearly, \(\kap_i(\alpha_i)\) is convergent iff \(\kap_{i \in \Omega}(\alpha_i)\) is \emph{Cauchy} in \(\Omega\). What this means is that for any \(\epsilon>0\) there exists a finite set of ordinals \(H\) such that for all finite sets of ordinals \(K \supset H\), \(\lvert\kap_{i \in K}(\alpha_i) - \kap_{i \in H}(\alpha_i)\rvert < \epsilon\).

\begin{definition}
For each finite set of ordinals \(\Omega\), let \(M_\Omega = \max_{k \in \Omega} \alpha_k\).
\end{definition}

\begin{lemma}
\hypertarget{lemma-11}{%
\label{lemma-11}}

Given the infinite sequence \((\alpha_k)_{k=1}^\infty\), if the radical \(\kap_i(\alpha_i)\) is convergent, then there exists an \(M \in \mathbb R\) such that \(M = \sup_{k=1}^\infty \alpha_k\).

\end{lemma}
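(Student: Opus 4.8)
The plan is to argue by contraposition: assuming that $\sup_{k=1}^\infty \alpha_k$ does \emph{not} exist as a real number, I will show that $\kap_i(\alpha_i)$ is not Cauchy in the sense made precise just above (using the finite-set notation $\kap_{i \in \Omega}$). Constructively, ``$\sup_k \alpha_k$ does not exist'' should be read as: it is not the case that the partial suprema converge, which — via the LPO-flavoured reading advertised in the abstract — gives us that for every bound $B$ we can locate an index $k$ with $\alpha_k > B$. So the real content is: if the terms $\alpha_k$ are unbounded, then the transfinite radical fails to be Cauchy.

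The key technical device is Lemma \ref{lemma-10} (the rearrangement lemma): swapping an adjacent out-of-order pair so the smaller term moves earlier only \emph{decreases} $\kap$. Iterating this, given any finite set $\Omega$, I can compare $\kap_{i \in \Omega}(\alpha_i)$ against the radical obtained by moving a single large term $\alpha_k$ (with $k \notin H$ for a reference set $H$) into the earliest slot. Concretely, the steps would be: (1) fix a putative Cauchy witness, i.e.\ a finite set $H$ and $\epsilon > 0$; (2) using unboundedness, pick an index $k$ beyond $\max H$ with $\alpha_k$ enormous — large enough that $\alpha_k$ alone, sitting in position $1$, contributes at least $\kap_{i \in \{k\}}(\alpha_k) = \alpha_k$, which exceeds $\kap_{i \in H}(\alpha_i) + \epsilon$ (here I use Lemma \ref{lemma-6}, property 3, to bound $\kap_{i \in H}(\alpha_i)$ crudely from above in terms of $M_H$, or just the trivial monotone bounds); (3) let $K = H \cup \{k\}$ and use Lemma \ref{lemma-10} repeatedly to push $\alpha_k$ to the front of $K$, concluding $\kap_{i \in K}(\alpha_i) \geq \kap_{i \in K'}(\alpha_i)$ where $K'$ has $\alpha_k$ first; then $\kap_{i \in K'}(\alpha_i) \geq \alpha_k$ by property 3 again. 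Wait — rearrangement \emph{decreases} $\kap$, so I must be careful about the direction; the correct move is to bound $\kap_{i \in K}(\alpha_i)$ \emph{below} by first rearranging $K$ into sorted order (which only lowers the value, still $\geq \alpha_k$ since that term survives somewhere and property 3 applies to the sorted radical too). This forces $|\kap_{i \in K}(\alpha_i) - \kap_{i \in H}(\alpha_i)| \geq \alpha_k - \kap_{i\in H}(\alpha_i) > \epsilon$, contradicting the Cauchy property.

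The main obstacle I anticipate is the careful handling of the constructive reading of ``$\sup_k \alpha_k$ exists'' and, correspondingly, what ``does not exist'' licenses us to do: a bald non-existence statement is constructively weak, so the honest version of this lemma presumably proves the contrapositive of \emph{the existence of a real $M$ with $M = \sup \alpha_k$}, extracting from a failure of convergence of the partial suprema an index where $\alpha_k$ is large. I would state this extraction step explicitly (it is where the ``reduction to LPO'' mentioned in the keywords bites) rather than wave at ``unbounded.'' A secondary, more routine wrinkle is making sure the rearrangement argument is applied in the value-decreasing direction and that property 3 of Lemma \ref{lemma-6} genuinely survives passage to the sorted finite radical — but that is bookkeeping, not a real difficulty. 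Everything else (the crude upper bound on $\kap_{i\in H}$, the monotonicity) is immediate from Lemma \ref{lemma-6}.
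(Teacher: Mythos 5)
Your proof is organized around contraposition: assume $\sup_k \alpha_k$ does not exist, derive unboundedness, produce a witness against Cauchyness. In a paper whose entire purpose is constructivity this is a fatal structural choice. Constructively, proving $\neg(\exists M) \Rightarrow \neg(\text{Cauchy})$ only yields $\text{Cauchy} \Rightarrow \neg\neg(\exists M)$, not the existence claim the lemma asserts; and the intermediate step is also broken, since the negation of ``there is a real $M$ with $M = \sup \alpha_k$'' does not constructively license picking an index $k$ with $\alpha_k$ as large as you like (a sequence bounded by $1$ can still fail to have a constructive supremum, e.g.\ $\alpha_k = b_k$ for an undecided binary sequence --- this is exactly the WLPO territory the paper itself discusses later). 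You flag this worry yourself, but ``the LPO-flavoured reading gives us unboundedness'' is precisely the non-constructive step you are not allowed; LPO-like extraction is what a constructive proof must avoid.

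There is also a quantitative gap even after you strip the contraposition away. Stated directly, your argument gives: if $H,\epsilon$ witness Cauchyness, then for every $k$, $\alpha_k \leq \kap_{i\in K}(\alpha_i) \leq \kap_{i\in H}(\alpha_i) + \epsilon$ (and you do not actually need Lemma~\ref{lemma-10} for this --- $\kap_{i\in K}(\alpha_i) \geq \alpha_k$ already follows from zeroing out the other terms). That bounds the $\alpha_k$ by $\kap_{i\in H}(\alpha_i) + \epsilon$, but this upper bound does \emph{not} shrink towards the lower bound $M_H$ as $\epsilon \to 0$: for instance with all $\alpha_k = 1$ you get the permanent gap $[1, \phi + \epsilon]$. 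Boundedness is not the constructive existence of the supremum. The paper's actual proof closes this gap: it moves the large term to the innermost slot via Lemma~\ref{lemma-10}, raises the remaining entries to $M_H$, factorizes out $M_H$, uses the shift rule and Lemma~\ref{lemma-3} to drive the finite $1$'s to the transfinite pattern, and thereby obtains $\epsilon > M_H\bigl(U(\alpha_k/M_H) - U(1)\bigr)$, i.e.\ $\alpha_k < M_H \cdot U^{-1}\bigl(\epsilon/M_H + U(1)\bigr)$. Since $U^{-1}(\phi) = 1$, this upper bound collapses onto $M_H$ as $\epsilon \to 0$, which is what makes the supremum constructively computable. The function $U$ and its inverse (Lemma~\ref{lemma-9}, Corollary~\ref{corollary-9.1}) exist in the paper precisely so this tight bound can be written down; your sketch never engages with them and hence never obtains a shrinking enclosure of the supremum.
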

\begin{proof}

\hypertarget{proof-14}{%
\label{proof-14}}
The following argument will be somewhat informal. This is so as to not confuse the reader.

Let \(\epsilon > 0\). Convergence means that we have a finite set of ordinals \(H\) such that for any finite set of ordinals \(K \supset H\), \(\kap_{i\in K}(\alpha_i) - \kap_{i \in H}(\alpha_i) < \epsilon\).

Consider an \(H\) for which the above is true. Consider any \(k \in \mathbb N\) such that \(\alpha_k > M_H\), and let \(K = H \cup \{\alpha_k\}\).

\(\kap_{i \in K}(\alpha_i)\) can be written explicitly as \(\sqrt{\alpha_1^{2^1} + \sqrt{\alpha_2^{2^2} + \dotsc\sqrt{\alpha_n^{2^n}}}}\). We're assuming that \(H=\{1,\dotsc,n\}\setminus\{k\}\) and \(k \leq n\), but this is without any loss of generality. Hence we have: \[\epsilon > \sqrt{\alpha_1^{2^1} + \sqrt{\alpha_2^{2^2} + \dotsc\sqrt{\alpha_n^{2^n}}}} - \kap_{i \in H}(\alpha_i)\] Let's say, for instance, that \(k = 2\). We will write \(\alpha_k\) as \(M_K\) to distinguish it. We thus have: \[\epsilon > \sqrt{\alpha_1^{2^1} + \sqrt{M_K^{2^2} + \dotsc\sqrt{\alpha_n^{2^n}}}} - \kap_{i \in H}(\alpha_i)\] We now shift \(M_K\) to the last index. By Lemma \ref{lemma-10}, this shrinks the lower bound: \[\epsilon > \sqrt{\alpha_1^{2^1} + \sqrt{\alpha_3^{2^2} + \dotsc\sqrt{\alpha_n^{2^n} + M_K^{2^n}}}} - \kap_{i \in H}(\alpha_i)\] We then drive all the other \(\alpha_i\) to \(M_H\), which they are all at most equal to. This reduces the lower bound further. \[\epsilon > \sqrt{M_H^{2^1} + \sqrt{M_H^{2^2} + \dotsc\sqrt{M_H^{2^n} + M_K^{2^n}}}} - \kap_{i \in H}(M_H)\] We factorise out \(M_H\): \[\epsilon > M_H\left(\sqrt{1 + \sqrt{1 + \dotsc\sqrt{1 + (M_K/M_H)^{2^n}}}} - \kap_{i \in H}(1)\right)\] We now switch to using notation: \[\begin{aligned}
\epsilon &> M_H\left(\kap_i([i < n] + (M_K/M_H)[i=n]) - \kap_i([i < n])\right)\\
&= M_H\left(\kap_i([i < n] + (M_K/M_H)[i=\omega]) - \kap_i([i < n])\right) \qquad{\text{shift rule from Lemma \ref{lemma-6}}}\\
&\geq M_H\left(\kap_i([i < \omega] + (M_K/M_H)[i=\omega]) - \kap_i([i < \omega])\right) \qquad{\text{using Lemma \ref{lemma-3} to drive $0$ terms to $1$}}\\
&=M_H(U(M_K/M_H) - U(1))
\end{aligned}\] We extract out the inequality \(\epsilon > M_H(U(M_K / M_H) - U(1))\), and rearrange it to: \[\alpha_K < M_H \cdot U^{-1}\left(\frac{\epsilon}{M_H} + U(1)\right)\] We thus conclude that the maximum value \(M\) exists, and can be bounded as: \[M \in \left[M_H,M_H \cdot U^{-1}\left(\frac{\epsilon}{M_H} + U(1)\right)\right].\] \end{proof}

\begin{theorem}
\label{theorem-necessity}
Given the infinite sequence \((\alpha_k)_{k=1}^\infty\) where each \(\alpha_k\) is a non-negative real number, if the transfinite radical \(\kap_i(\alpha_i)\) converges, then there exists an infinite sequence \((s_k)_{k=1}^\infty\) such that \(s_n = \sup_{k=n}^\infty a_k^{2^{-k}}\).
\end{theorem}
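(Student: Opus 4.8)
The plan is to reduce the statement to Lemma~\ref{lemma-11} by isolating, for each $n$, the ``tail radical from index $n$''. Fix $n \ge 1$ and write $a_k = \alpha_k^{2^k}$, so that $\kap_i(\alpha_i) = \sqrt{a_1 + \sqrt{a_2 + \dotsb}}$ and $a_k^{2^{-k}} = \alpha_k$; it therefore suffices to produce, uniformly in $n$, a real number $s_n$ with $s_n = \sup_{k \ge n}\alpha_k$ (for $n = 1$ this is precisely Lemma~\ref{lemma-11}). Set $\gamma_j := \alpha_{n-1+j}^{2^{n-1}}$ for $j \ge 1$. Since $\gamma_j^{2^j} = \alpha_{n-1+j}^{2^{n-1+j}}$, expanding approximants directly shows that, writing $R_m$ for the $m$-th approximant of $\kap_i(\alpha_i)$ and $S_p$ for the $p$-th approximant of $\kap_j(\gamma_j)$, one has $R_{n-1+p} = F_n(S_p)$ for every $p \ge 0$, where $F_n(x) = \sqrt{\alpha_1^{2^1} + \sqrt{\alpha_2^{2^2} + \dotsb + \sqrt{\alpha_{n-1}^{2^{n-1}} + x}}}$ (with $F_1$ the identity).

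I would first record that $F_n\colon \mathbb R^+ \to [F_n(0),\infty)$ is continuous, strictly increasing (each nested square root is), and unbounded, and that its inverse is given by an explicit polynomial in $y$ (iterate ``square, then subtract $\alpha_k^{2^k}$''), hence is continuous and uniformly continuous on every bounded interval. Then I would invoke the hypothesis: $\kap_i(\alpha_i)$ converges, so $(R_m)_m$ is Cauchy, and hence so is its tail $(R_{n-1+p})_p = (F_n(S_p))_p$; applying the (locally uniformly continuous) inverse of $F_n$ to this bounded Cauchy sequence shows $(S_p)_p$ is Cauchy, i.e.\ $\kap_j(\gamma_j)$ converges.

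Next I would apply Lemma~\ref{lemma-11} to $(\gamma_j)_{j=1}^\infty$, obtaining a real $M_n$ with $M_n = \sup_j \gamma_j = \sup_{k \ge n}\alpha_k^{2^{n-1}}$. Setting $s_n := M_n^{2^{-(n-1)}}$, one checks directly that $s_n = \sup_{k\ge n}\alpha_k$: it is an upper bound because $\alpha_k^{2^{n-1}} \le M_n = s_n^{2^{n-1}}$ and $t \mapsto t^{2^{n-1}}$ is strictly increasing; and it is approached, because given $\epsilon > 0$, uniform continuity of $t \mapsto t^{2^{-(n-1)}}$ on $[0, M_n]$ turns a near-maximiser $\gamma_j$ of $(\gamma_j)$ into an index $k = n-1+j \ge n$ with $\alpha_k = \gamma_j^{2^{-(n-1)}} > s_n - \epsilon$. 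As $s_n$ is built from $n$ and the data by one uniform recipe, the sequence $(s_n)_{n=1}^\infty$ is produced, and $s_n = \sup_{k\ge n}a_k^{2^{-k}}$ as required.

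The step I expect to be the main obstacle is ``$\kap_i(\alpha_i)$ converges $\implies \kap_j(\gamma_j)$ converges''. Constructively one cannot simply ``take the tail of the limit'' of a transfinite radical: the approximants $S_p$ of the tail radical are not literally a tail of the approximants $R_m$ of the full radical, only their images under $F_n^{-1}$, so the argument relies on the strict monotonicity of $F_n$ and the concreteness of $F_n^{-1}$ to transfer Cauchyness back. The other place requiring care is the exponent bookkeeping built into the $\kap$-notation, which forces the tail coefficients to be $\alpha_{n-1+j}^{2^{n-1}}$ rather than $\alpha_{n-1+j}$, and hence forces the closing $2^{-(n-1)}$-th root; this is routine once written out. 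Everything else is a direct appeal to Lemma~\ref{lemma-11}.
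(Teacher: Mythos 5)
Your proof is correct and follows essentially the same strategy as the paper: reduce the statement to Lemma~\ref{lemma-11} applied to a tail of the sequence. However, you go beyond the paper's one-sentence proof in a substantive way. The paper simply applies Lemma~\ref{lemma-11} to $\beta_i = \alpha_i[i\geq n]$ without justifying that the tail radical $\kap_i(\beta_i)$ converges; this is exactly the obstacle you identify and address. Note that passing from $\kap_i(\alpha_i)$ to $\kap_i(\beta_i)$ \emph{decreases} the coefficients, and Lemma~\ref{lemma-3} then says differences of approximants can only \emph{grow}, so Cauchyness does not transfer by a direct monotone comparison; your route through the explicit, strictly increasing $F_n$ with its polynomial (hence locally uniformly continuous) inverse is a correct way to transfer it, since the approximants $R_m$ are bounded. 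Your reindexed-and-rescaled sequence $\gamma_j = \alpha_{n-1+j}^{2^{n-1}}$ differs cosmetically from the paper's zero-padded $\beta_i = \alpha_i[i\geq n]$: the two tail radicals are related by the continuous bijection $x\mapsto x^{2^{-(n-1)}}$, so converge together, and your closing $2^{-(n-1)}$-th root correctly recovers $\sup_{k\geq n}\alpha_k$, whereas the paper's choice of $\beta_i$ keeps the original exponents and avoids that root. In short, your proposal is a faithful and more rigorous elaboration of the paper's argument, supplying a convergence step that the paper leaves implicit.
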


\begin{proof}
\hypertarget{proof-15}{%
\label{proof-15}}
We construct \(s_n\) by applying Lemma \ref{lemma-11} (which comes with a constructive formula!) to the sequence \(\beta_i = \alpha_i[i \geq n]\).

\end{proof}
\hypertarget{constructivity-of-reformulations-of-herschfelds-theorem}{%
\section{Constructivity of reformulations of Herschfeld's Theorem}\label{constructivity-of-reformulations-of-herschfelds-theorem}}

My constructive statement of Herschfeld's Theorem is given by Theorem 1 and Theorem 2, which combined say:

\begin{quote}
\textbf{Constructive HCT}: An infinite radical \(\sqrt{a_1 + \sqrt{a_2 + \dots}}\) where each \(a_k\) is a non-negative real number converges if and only if there is a sequence \(s_n\) such that \(s_n = \sup_{k=n}^\infty a_k^{2^{-k}}\).
\end{quote}

Herschfeld's original statement of his theorem is somewhat simpler:

\begin{quote}
\textbf{Strong HCT}: An infinite radical \(\sqrt{a_1 + \sqrt{a_2 + \dots}}\) where each \(a_k\) is a non-negative real number converges if and only if there is a constant \(M\) such that \(M = \limsup_{k=1}^\infty a_k^{2^{-k}}\).
\end{quote}

We shall break Strong HCT into a conjuction of two statements, both converses of each other.

\begin{quote}
\textbf{Strong HCT a}: Given a sequence \((a_k)\) where each \(a_k\) is a non-negative real number, if there is a constant \(M\) such that \(M = \limsup_{k=1}^\infty a_k^{2^{-k}}\), then \(\sqrt{a_1 + \sqrt{a_2 + \dots}}\) converges.
\end{quote}

\begin{quote}
\textbf{Strong HCT b}: Given a sequence \((a_k)\) where each \(a_k\) is a non-negative real number, if \(\sqrt{a_1 + \sqrt{a_2 + \dots}}\) converges, then there is a constant \(M\) such that \(M = \limsup_{k=1}^\infty a_k^{2^{-k}}\).
\end{quote}

We will now show the ``issues'' with Strong HCT b, by quoting the statement WLPO \cite{bishop1967}.

\begin{quote}
\textbf{WLPO} (\emph{Weak Limited Principle of Omniscience}): For any infinite binary sequence \((b_n)\), either all elements of the sequence are \(1\) or not all elements are \(1\).
\end{quote}

The statement WLPO is universally accepted to be non-constructive. It turns out that Strong HCT b actually implies it.

\begin{theorem}
\label{strong-hct-implies-lpo}
Strong HCT b implies WLPO.

\end{theorem}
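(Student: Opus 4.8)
The plan is to reduce WLPO to Strong HCT b: from an arbitrary binary sequence $(b_n)$ I will manufacture a radical that \emph{provably} converges — so that the hypothesis of Strong HCT b is met by purely constructive means — but whose $\limsup_k a_k^{2^{-k}}$, once Strong HCT b hands it to us as a genuine real number, tells us whether every $b_n$ equals $1$.

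Concretely, I would put $c_k = 1$ if $b_1 = b_2 = \dots = b_k = 1$ and $c_k = 0$ otherwise (a decidable definition), and set $a_k = c_k^{2^k}$, so that $a_k^{2^{-k}} = c_k$ and each $a_k$ is a computable non-negative real. The sequence $(c_k)$ is non-increasing and $\{0,1\}$-valued, hence $\sup_{k\ge n} c_k = c_n$ for every $n$; so the explicitly computable sequence $(c_n)_n$ witnesses the hypothesis of Theorem \ref{theorem-sufficient} with $M_n = c_n$. Therefore $\sqrt{a_1 + \sqrt{a_2 + \dots}} = \kap_i(c_i)$ converges, and — crucially — this conclusion is constructive, since the proof of Theorem \ref{theorem-sufficient} is (it even produces a modulus, via Lemma \ref{lemma-5}).

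Now I would invoke Strong HCT b to obtain a real number $M$ with $M = \limsup_k a_k^{2^{-k}} = \lim_k c_k \in [0,1]$ (the limsup collapses to a limit because $(c_k)$ is non-increasing). First I would note $\neg(0 < M < 1)$: if $M < 1$ then $c_{k_0} = 0$ for some $k_0$, whence $c_k = 0$ for all $k \ge k_0$ and so $M = 0$. Then I compute a rational $q$ with $|q - M| < \tfrac14$ and branch on the decidable alternative $q \le \tfrac12$ or $q > \tfrac12$. In the first case $M < 1$, and since $(\forall n)(b_n = 1)$ would force $M = 1$, we conclude that not all $b_n$ equal $1$. In the second case $M > \tfrac14$, which together with $\neg(0 < M < 1)$ and $M \le 1$ forces $M = 1$; since a single $b_j = 0$ would force $M = 0$, we conclude that all $b_n$ equal $1$. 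Either branch resolves the WLPO dichotomy for $(b_n)$, completing the reduction.

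The one delicate point — the main obstacle — is the first step: Strong HCT b is only exploitable if convergence of the radical is established with no non-constructive input, since assuming convergence gratuitously would itself amount to a monotone-convergence-style principle. Making $(c_k)$ non-increasing is precisely the device that turns each $\sup_{k \ge n} c_k$ into a trivially computable number and lets Theorem \ref{theorem-sufficient} supply convergence for free. Everything after that is the standard ``approximate the witness real finely enough to read off one bit'' manoeuvre from constructive reverse mathematics.
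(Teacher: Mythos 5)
Your proof is correct, and it follows the same high-level plan as the paper (produce a radical whose convergence is established constructively, then invoke Strong HCT b to recover a real number $M$ from which one bit of information is extracted), but the concrete construction is genuinely different. The paper sets $c_n = 1$ when $b_n = 1$, inserts $\phi$ at the falling edge (where $b_{n-1}=1$, $b_n = 0$), and $0$ thereafter, so that the radical $\sqrt{c_1 + \sqrt{c_2 + \dotsb}}$ is designed to equal a fixed known constant regardless of $(b_n)$ — convergence is then ``free'' because the limit is known in advance. (As an aside, for $\sqrt{1+\sqrt{1+\dotsb\sqrt{1+\sqrt{c_j}}}}$ to equal $\phi$ one needs $c_j = \phi^2$, not $\phi$, so the paper's constant looks like a typo, and the construction implicitly requires $(b_n)$ to be non-increasing; neither affects the argument's substance.) You instead take $c_k = [b_1=\dotsb=b_k=1]$, which makes $(c_k)$ monotone non-increasing, so $\sup_{k\ge n}c_k = c_n$ is decidably computable and Theorem \ref{theorem-sufficient} delivers convergence directly — you never need to identify the limit of the radical. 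This is arguably cleaner: it bypasses the need to engineer the radical's exact value, it avoids the $\phi$-vs-$\phi^2$ issue entirely, and it works for an arbitrary binary sequence without silently normalising it to a non-increasing one. The back end — splitting on a rational approximation of $M$ and arguing that $M<1$ forbids $\forall n\,(b_n=1)$ while $M>0$ forces it — is essentially the same cotransitivity manoeuvre the paper uses with ``either $M<1$ or $M>0$''.
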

\begin{proof}

\hypertarget{proof-16}{%
\label{proof-16}}
Consider an infinite binary sequence \((b_n)_{n \in \mathbb N}\). We define a new sequence \((c_n)\) of real numbers, such that

\[c_n = \begin{cases}
1,& b_n = 1,\\
\phi,& b_{n-1}=1\text{ and } b_n = 0\\
0,& \text{otherwise}
\end{cases}\]

The infinite radical given by \(\sqrt{c_1 + \sqrt{c_2 + \dots}}\) clearly converges to \(\phi\). This means that \emph{we do know} what the radical converges to. We can even bound the rate of convergence. But according to Strong HCT b, this means that we can determine the value of \(M = \limsup_k c_k^{2^{-k}}\).

Either \(M < 1\) or \(M > 0\).

If \(M < 1\) then it can't be that all terms of \(b_n\) are \(1\).

If \(M > 0\) then all terms of \(b_n\) are \(1\).

We have WLPO.

\end{proof}
This shows that our formulation of Constructive HCT, though strange, is necessary to be in the form it is.\footnote{Originally, it was intended for this section to discuss different interpretations of the statement ``\(M = \limsup_{k=1}^\infty a_k^{2^{-k}}\)''. The intention was to show that for an overly strong definition of the \(\limsup\) operator, \emph{Strong HCT b} would be equivalent to LPO; and for an overly weak definition of \(\limsup\), \emph{Strong HCT a} would be equivalent to Markov's Principle. Markov's Principle is a statement considered mildly acceptable to constructivists, but which cannot be derived from the rules of constructive logic alone. This discussion was eventually omitted because I wasn't sure how to write it in a clear way.}

\textbf{Acknowledgements}: The original version of this paper was very different to this one. The change is largely because of e-mail conversations between me and Fred Richman. These conversations challenged me to reformulate the theorem in a way that I could convincingly call it the ``Constructive Herschfeld Theorem''. The result of this was a completely new formulation of the theorem, and a much better strategy for proving it. I'd like to extend my thanks to Fred for his patience.

\bibliography{constructive-herschfeld}{}
\bibliographystyle{plain}

\end{document}